\theoremstyle{plain}
\newtheorem{theorem}{Theorem}
\newtheorem{fact}[theorem]{Fact}
\newtheorem{lemma}[theorem]{Lemma}
\newtheorem{corollary}[theorem]{Corollary}
\theoremstyle{definition}
\newtheorem{definition}[theorem]{Definition}
\newtheorem{example}[theorem]{Example}
\theoremstyle{remark}
\newtheorem{remark}[theorem]{Remark}
\numberwithin{theorem}{section}
\numberwithin{equation}{section}
\definecolor{darkgreen}{rgb}{0,.35,0}
\definecolor{darkblue}{rgb}{0,0,.1}
\definecolor{darkred}{rgb}{.6,0,0}
\newcommand{\mat}[1]{\ensuremath{\mathbf{#1}}}
\renewcommand{\vec}[1]{\ensuremath{\mathbf{#1}}}
\newcommand{\den}{\mathrm{den}}
\newcommand{\num}{\mathrm{num}}
\newcommand{\rank}{\mathrm{rank}}
\newcommand{\lcm}{\mathrm{lcm}}
\newcommand{\sqfr}{\mathrm{sqfr}}
\newcommand{\bigO}{\mathrm{O}}
\newcommand{\softO}{{O\,\tilde{}\,}}
\newcommand{\RR}{\mathbb{R}}
\newcommand{\F}{{\mathsf{F}}}
\newcommand{\K}{{\mathsf{K}}}
\begin{document}
\title{On Parametric Linear System Solving}

\author[12]{\fnm{Robert M.} \sur{Corless}}\email{rcorless@uwo.ca}

\author*[2]{\fnm{Mark} \sur{Giesbrecht}}\email{mwg@uwaterloo.ca}

\author[2]{\fnm{Leili} \sur{Rafiee Sevyeri}}\email{leili.rafiee.sevyeri@uwaterloo.ca}

\author[3]{\fnm{B. David} \sur{Saunders}}\email{saunders@udel.edu}

\affil[1]{\orgdiv{Ontario Research Centre for Computer Algebra, School of Mathematical and Statistical Sciences},
\orgname{University of Western Ontario},
\orgaddress{\city{London}, \state{ON}, \country{Canada}}}

\affil[2]{\orgdiv{Cheriton School of Computer Science},
\orgname{University of Waterloo},
\orgaddress{\city{Waterloo}, \state{ON}, \postcode{N2L 3G1}, \country{Canada}}}

\affil[3]{\orgdiv{Department of Computer and Information Sciences},
\orgname{University of Delaware},
\orgaddress{\city{Newark}, \state{Delaware}, \country{USA}}}

\abstract{
  Parametric linear systems are linear systems of equations in which
  some symbolic parameters, that is, symbols that are not considered
  to be candidates for elimination or solution in the course of
  analyzing the problem, appear in the coefficients of the system.  In
  this paper we assume that the symbolic parameters appear
  polynomially in the coefficients and that the only variables to be
  solved for are those of the linear system.  The consistency of the
  system and expression of the solutions may vary depending on the
  values of the parameters.  It is well-known that it is possible to
  specify a covering set of regimes, each of which is a
  Zariski-constructible condition on the parameters together with a
  solution description valid under that condition.

  We provide a method of solution that requires time polynomial in the
  matrix dimension and the degrees of the polynomials when there are
  up to three parameters.  We also discuss examples suggesting how the
  method may be useful beyond the formal three-parameter setting.
  
  In previous methods the number of regimes needed is exponential in
  the system dimension and polynomial degree of the parameters.  Our
  approach exploits the Hermite and Smith normal forms that may be
  computed when the system coefficient domain is mapped to the
  univariate polynomial domain over suitably constructed fields.  Our
  method identifies \emph{intrinsic singularities} and
  \emph{ramification points} where the algebraic and geometric
  structure of the matrix changes.

  Parametric eigenvalue problems are addressed as well: we treat
  $\lambda$ as a parameter in addition to those in $\mat{A}$ and solve
  the parametric system $(\lambda \mat{I} - \mat{A})\vec{u} = 0$. The
  algebraic conditions on $\lambda$ required for a nontrivial
  nullspace define the eigenvalues. We do not directly address the
  problem of computing the Jordan form, but our approach allows the
  construction of the algebraic and geometric eigenvalue
  multiplicities revealed by the Frobenius form, which is a key step
  in the construction of the Jordan form of a matrix.
}

\maketitle 

\section{Introduction}
\label{sec:intro}

In broad generality, symbolic computation is concerned with
mathematical equations that contain \emph{symbols}; symbols are used
both for \emph{variables}, which are typically to be solved for, and
\emph{parameters}, which are typically carried through and appear in
the \emph{solutions}, which are then interpreted as formulae: that is,
objects that can be further studied, perhaps by varying the
parameters.  One prominent early researcher said that the difference
between symbolic and numeric computation was merely a matter of
\emph{when} numerical values were inserted into the parameters: before
the computation meant you were going to do things numerically, and
after the computation meant you had done symbolic computation.  The
words ``parameters'' and ``variables'' are therefore not precisely
descriptive, and can often be used interchangeably.  Indeed as a
matter of practice, polynomial equations can often be taken to have
one subset of its symbols taken as variables rather than any other
subset in quite strategic fashion: it may be better to solve for $x$
as a function of $y$ than to solve for $y$ as a function of $x$.
% Indeed sometimes it is better still to \emph{introduce} an
% artificial parameter, say $s$, and solve for $x(s)$ and $y(s)$.  A
% frequent convention (not used in this paper) is to denote
% variables by Roman letters near the end of the alphabet, from
% $p$ to $z$, and parameters by Roman letters near the beginning,
% perhaps $a$ to $e$.  In defiance of this convention, in this paper
% we use $x$, $y$, and $z$, and subscripted versions of such, as
% parameters.

In this paper we are concerned with systems of equations containing
several symbols, some of which we take to be variables, and all the
rest as parameters.  In addition, we restrict our attention to
problems in which the \emph{variables} appear only linearly.
Parameters are allowed to appear polynomially, of whatever degree.
  
Parametric Linear Systems (PLS) arise in many contexts, for instance
in the analysis of the stability of equilibria in dynamical systems
models such as occur in mathematical biology and other areas.
Understanding the different potential kinds of dynamical behavior can
be important for model selection as well as analysis.  Another
important area of interest is the role of parametric linear systems in
dealing with the stability of the equilibria of parametric autonomous
systems of ordinary differential equations (see~\citep{Sit92}
and~\citep{Gol87}).  One particularly famous example is the
Lotka-Volterra system which arises naturally from predator-prey
equations.  See also~\citep{SavVoi87a} and~\citep{SavVoi87b}.  Other
examples of the use of parametric linear system from science and
engineering include their application in computing the characteristic
solutions for differential equations~\citep{DauLio88}, dealing with
colored Petri nets~\citep{Jen97} and in operations research and
engineering~\citep{DesTho01}, \citep{Kol02}, \citep{MuhMul01},
\citep{WinMei84}.  Some problems in robotics \citep{Buc88} and certain
modelling problems in mathematical biology, see e.g.,
\citep{WahBet19}, also can benefit from the ability to effectively
solve parametric linear systems.

After some discussion of prior comprehensive solving work in
Section~\ref{sec:prevwork}, we proceed with formal problem and
solution definitions for parametric linear systems in Section
\ref{sec:defs}. Our primary tool for solving these is by way of
\emph{Comprehensive Triangular Smith Normal Form (CTSNF)}, which is
introduced in Section \ref{sec:pls2snf}, where we also reduce PLS to
CTSNF. Section \ref{sec:solve} describes the solution of CTSNF
problems for the case of up to three parameters.

An application that seems at first to be of only theoretical interest
is the computation of the \emph{matrix logarithm}, or indeed any of
several other matrix functions such as matrix square root.  We briefly
discuss this example in more detail with a pair of small matrices in
Section~\ref{sec:matrixlogarithm}.  We also give other examples in
Section~\ref{sec:nfeigs}.

A preliminary conference version of this work appeared in
\cite{CorGie20}.  The present version expands the definitions, proofs,
and treatment of constructible regimes.

\section{Previous Work on Parameterized Linear Systems}
\label{sec:prevwork}

Interest in computation of the solution of parameterized linear
systems dates back to the beginning of symbolic computation.  For
instance, one of the first things users have requested of computer
algebra systems is the explicit form of the inverse of a matrix
containing only symbolic entries\footnote{This is merely an anecdote,
  but one of the present authors attests that this really has
  happened.}: the user is then typically quite dissatisfied at the
complexity of the answer if the dimension is greater than, say,
three. Of course, the determinant itself, which must appear in such an
answer, has a factorial number of terms in it, and thus growth in the
size of the answer must be more than exponential.  Therefore the
complexity of any algorithm to solve parameterized linear systems must
be at least exponential in the number of parameters.

An interesting pair of papers addressing the case of only one
parameter is~\citep{BoyKal14} and~\citep{KalPer17}.  These papers
assume full rank of the linear system, and thus compute the
``generic'' case when in fact there are isolated values of the
parameter for which the rank drops, and use rational interpolation of
the numerical solutions of specialized linear systems to recover this
generic solution.

Many authors have sought comprehensive solutions, by which is meant
complete coverage of all parametric regimes, through various means.
One of the first explicit methods was the matrix-minor based approach
of \cite{Sit92}, which enables practical solution of many problems of
interest.  Recently, the problem of computing the Jordan form of a
parametric matrix once the Frobenius form is known has been approached
using Regular Chains~\citep{CorMor17}, and this has been moderately
successful in practice.  Simple methods and heuristics for linear
systems containing parameters continue to generate interest, even when
Regular Chains are used, such as in~\citep{CamMor20}.

Other authors such as~\cite{Wei92}, \cite{LemMaz05}, \cite{MonWib10},
\cite{KapSun13}, and~\cite{Mon18} have tackled the even more difficult
problem of computing the comprehensive solution of systems of
\emph{polynomial} equations containing parameters, and of course their
methods can be applied to the linear equations being considered here.

By restricting our attention in this paper to linear problems and to
those of a constant number of parameters (e.g., three or fewer) we are
able to guarantee better worst case performance (polynomially many
solution regimes) and hope to provide better efficiency in many
instances than is possible using those general-purpose approaches.

\section{Definitions and Notation}
\label{sec:defs}

Let $\F$ be a field and $Y = (y_1, \ldots, y_s)$ a list of parameters,
and fix an algebraic closure $\overline{\F}$ of $\F$.  Then $\F[Y]$ is
the ring of polynomials and $\F(Y)$ is the field of rational functions
in $Y$.  For each tuple $a = (a_1, \ldots, a_s)$ in $\overline{\F}^s$,
evaluation at $a$ is a mapping $\F[Y] \rightarrow \overline{\F}$.  We
will extend this mapping componentwise to polynomials, vectors,
matrices, and sets thereof over $\F[Y]$ (i.e., in $\F[Y][x]=\F[Y,x]$).
Equivalently, one may interpret evaluation in arbitrary field
extensions of $\F$; we fix $\overline{\F}$ for notational convenience.

In many of our constructions an additional symbol $x$ will play a
special role: it is a parameter of the PLS problems but, for purposes
of computing Hermite/Smith forms, we treat $x$ as the univariate
polynomial variable over $\F(Y)$.  Accordingly, evaluation at $Y=a$
extends to a \emph{partial} evaluation map
$\F[Y,x]\rightarrow \overline{\F}[x]$ by substituting $Y=a$ and
leaving $x$ unbound.  For $f\in \F[Y,x]$ we write $f(a,x)$ for the
resulting polynomial in $x$, and for $\xi\in\overline{\F}$ we write
$f(a,\xi)$ (equivalently $f(a,x)\vert_{x=\xi}$) for its value under
the further specialization $x=\xi$.

We use the Householder convention, typesetting matrices in upper case
bold, e.g.~\mat{A}, and lower case bold for vectors, e.g.~\vec{b}.

For the most part, for such objects over $\F[Y,x]$, we know $Y$ and
$x$ from context and write $\mat{A}$ rather than $\mat{A}(Y,x)$, but
write $\mat{A}(a)\in\overline{\F}[x]^{m\times n}$ for the partial
evaluation at $Y=a$, and $\mat{A}(a,\xi)\in\overline{\F}^{m\times n}$
for the full evaluation at $(Y,x)=(a,\xi)$.

For a set of polynomials $S\subseteq \F[Y]$, we will denote by $V(S)$
the variety of the ideal generated by $S$ in $\overline{\F}^s$.  This
is the set of tuples $a\in\overline{\F}^s$ such that $f(a)=0$, for all
$f \in S$.  For a polynomial $g \in \F[Y]$ let $D(g)$ denote its
principal Zariski-open set,
$D(g)=\{a\in \overline{\F}^s\mid g(a)\neq 0\} =
\overline{\F}^s\setminus V(g)$.  For a finite set $N\subseteq \F[Y]$
define $D(N)=\bigcap_{n\in N}D(n)=D\big(\prod_{n\in N}n\big)$.

When polynomial sets also involve $x$, we use the same notation with
respect to the extended parameter space
$\overline{\F}^s\times \overline{\F}$.  For example, for
$S\subseteq \F[Y,x]$ we write $V(S)$ for the set of pairs $(a,\xi)$
such that $f(a,\xi)=0$ for all $f\in S$, and for $g\in \F[Y,x]$ we
write $D(g)=\{(a,\xi)\mid g(a,\xi)\neq 0\}$ and
$D(N)=D\big(\prod_{n\in N}n\big)$ for finite $N\subseteq \F[Y,x]$.  We
will be concerned with basic Zariski-constructible (locally closed)
sets of the form $V(Z)\cap D(N)$.  We use Zariski-constructible rather
than ``semi-algebraic'' since we do not assume that $\F$ is an ordered
field.

Our inputs are polynomial in the parameters but the output
coefficients in general are rational functions.  The evaluation
mapping extends partially to $\F(Y,x)$: for a rational function
$r(Y,x)=n(Y,x)/d(Y,x)$ in lowest terms ($n$ and $d$ relatively prime),
define $\den(r)=d$; then $r(a,\xi)$ is well defined for
$(a,\xi)\in D(\den(r))$.  We extend $\den(\cdot)$ componentwise to
vectors and matrices over $\F(Y,x)$ (e.g., by taking the least common
multiple of entry denominators).  Throughout, whenever regime data
contains rational function coefficients, we explicitly adjoin the
relevant denominator factors to the nonvanishing constraint set $N$ so
that all expressions are well defined on the corresponding
constructible set.

When a condition is imposed on an element $q$ of a rational function
field such as $\F_Y(x)$, we use the following convention.  After
writing $q=\num(q)/\den(q)$ in lowest terms, the condition $q=0$
means $\num(q)=0$ together with $\den(q)\neq 0$, and the condition
$q\neq 0$ means $\num(q)\neq 0$ together with $\den(q)\neq 0$.
Thus writing $q$ in a vanishing set is shorthand for adjoining
$\num(q)$ to $Z$ and the factors of $\den(q)$ to $N$, while writing
$q$ in a nonvanishing set is shorthand for adjoining the factors of
both $\num(q)$ and $\den(q)$ to $N$.  The same convention is applied
componentwise to vectors and matrices.  For regimes over an algebraic
parameterized extension, these conditions are interpreted in the
corresponding quotient coordinate ring; equivalently, after choosing a
basis for the quotient, one may clear denominators and equate the
corresponding coordinate polynomials over the base parameter ring.

\begin{definition}
  \label{def:fac}
  For a polynomial $\Delta$ in a polynomial ring over $\F$ (e.g., in
  $\F[Y]$ or $\F[Y,x]$) we write $\mathrm{Fac}(\Delta)$ for the
  (finite) set of monic irreducible factors of $\Delta$ in that ring.
  For rational matrices/vectors with entries in $\F(Y,x)$ we write
  $\Delta(\mat{M}) := \den(\mat{M})$ and
  $\mathrm{Fac}(\mat{M}) := \mathrm{Fac}(\Delta(\mat{M}))$.
\end{definition}

\begin{definition}
  \label{def:welldefined}
  A PLS instance $(\mat{A},\vec{b},N,Z)$ is \textbf{well defined} if
  all entries of $\mat{A}$ and $\vec{b}$ are well defined on
  $V(Z)\cap D(N)$; equivalently,
  $V(Z)\cap D(N)\subseteq D(\den(\mat{A},\vec{b}))$.
\end{definition}

\begin{definition}
  The data for a PLS problem is a matrix $\mat{A}$ and right hand side
  vector $\vec{b}$ over $\F[Y,x]$, together with a
  Zariski-constructible constraint, $V(Z)\cap D(N)$, with
  $N,Z \subseteq \F[Y,x]$.  We are only interested in those parameter
  value tuples $(a,\xi)\in \overline{\F}^s\times\overline{\F}$ in
  $V(Z)\cap D(N)$, i.e., on which the polynomials in $N$ are nonzero
  and the polynomials in $Z$ are zero.
  %
  % For example, $Z$ might be empty while $N$ contains all irreducible
  % factors that occur in denominators of the entries of $\mat{A}$,
  % $\vec{b}$.  Thus the constraint is merely that $\mat{A}(a,\xi)$ and
  % $\vec{b}(a,\xi)$ are well defined for all $(a,\xi) \in V(Z)\cap D(N)$.
  % For another example, $\mat{A}$, $\vec{b}$ might have all entries
  % polynomial in the parameters and both constraint sets might be
  % empty.  Beyond these base cases, in subproblems that arise, PLS
  % will be posed with more restrictive $N,Z$.

  For the PLS problem $(\mat{A}, \vec{b}, N, Z)$:
  \begin{itemize}
  \item a \textbf{solution regime} is a tuple
    $(\vec{u}, \mat{B}, N', Z')$, with entries of $\vec{u}$ and
    $\mat{B}$ in $\F_Y(x)$ for some parameterized extension $\F_Y$ of
    $\F$ (Definition~\ref{def:paramext}), such that all denominator
    factors occurring in $\vec{u}$ and $\mat{B}$ are contained in
    $N'$, and such that, for all $(a,\xi) \in V(Z')\cap D(N')$,
    $\vec{u}(a,\xi)$ is a solution vector and $\mat{B}(a,\xi)$ is a
    matrix whose columns form a nullspace basis for $\mat{A}(a,\xi)$.
  % In other words, for all $(a,\xi) \in V(Z')\cap D(N')$, we have
  % $\mat{A}(a,\xi) \vec{u}(a,\xi) = \vec{b}(a,\xi)$ and
  % $\mat{A}(a,\xi)\mat{B}(a,\xi) = 0$.

  \item An \textbf{inconsistency regime} is a triple $(\bot,N',Z')$
    such that, for all
    $(a,\xi) \in V(Z')\cap D(N')$, the specialized system
    $\mat{A}(a,\xi)\vec{u}=\vec{b}(a,\xi)$ has no solution. The $\bot$ symbol is to remind that no solution vector is on offer in this case.

  \item A \textbf{PLS solution} is a set of regimes (solution regimes
    and inconsistency regimes) that covers $V(Z)\cap D(N)$, which
    means every parameter value assignment that satisfies the problem
    Zariski-constructible constraint $N,Z$ also satisfies at least one
    regime Zariski-constructible constraint.  In other words
    \[
      V(Z)\cap D(N) ~\subseteq~ \bigcup_{i=1}^k V(Z_i)\cap D(N_i).
    \]
  \end{itemize}
\end{definition}

We call entries that \emph{must} occur in any $Z$ in the solution an
\emph{intrinsic restriction}, or singularity. We call the differing
sets $V(Z_i)\cap D(N_i)$ that may occur in covers of $V(Z)\cap D(N)$
the \emph{ramifications} of the cover.

The following examples illustrate the PLS definition and also sketch
the prior approach to PLS given by \cite{Sit92}.%

\begin{example}
  Let $\F$ be a field, $Y=(y)$, and consider the PLS instance with
  \[
    \mat{A}=\begin{bmatrix} x & 0 \\ 0 & 0 \end{bmatrix},\qquad
    \vec{b}=\begin{bmatrix} 0 \\ y \end{bmatrix},
  \]
  and with empty initial constraints $N=Z=\emptyset$.  If $y\neq 0$
  then the second equation reads $0=y$ and the system is inconsistent;
  thus we have inconsistency at the points $D(y)$ (i.e., we have
  inconsistency regimes $(\bot,\{y\},\emptyset)$.  On $V(y)\cap D(x)$
  the matrix $\mat{A}$ has rank $1$ and the solution set is
  $\{(0,t)^T\mid t\in\overline{\F}\}$, so one solution regime is
  $(\vec{u}=\vec{0},\,\mat{B}=[0,1]^T,\,N'=\{x\},\,Z'=\{y\})$.  On
  $V(y,x)$ the matrix $\mat{A}$ is zero and every vector solves; one
  may take
  $(\vec{u}=\vec{0},\,\mat{B}=\mat{I}_2,\,N'=\emptyset,\,Z'=\{y,x\})$.
\end{example}

We now sketch the minor-based regime construction from \cite{Sit92}.
If, for $\mat{M}$ of size $r\times r~$, $\mat{A}$ is
$\begin{bmatrix}
  \mat{M} & \mat{B}\\
  \mat{C} & \mat{D}
\end{bmatrix}$,
and conformally
$\vec{b} = \begin{bmatrix} 
  \vec{c} & \vec{d} 
\end{bmatrix}^T$,
then a solution
$\mat{u} =\begin{bmatrix} 
  \vec{v} & \vec{w} 
\end{bmatrix}^T$
satisfies 
\begin{equation}
  \label{eq1}
  \mat{M}\vec{v} + \mat{B}\vec{w} = \vec{c}
\end{equation}
and
\begin{equation}
  \label{eq2}
  \mat{C}\vec{v} + \mat{D}\vec{w} = \vec{d} \>.
\end{equation} 
Under the condition that $\det(\mat{M})$ is nonzero and all larger
minors of $\mat{A}$ are zero, equation \eqref{eq1} can be solved with
specific solution $\vec{w} = 0$ and $\vec{v} = \mat{M}^{-1}\vec{c}$.
Provided the system is consistent (equation \eqref{eq2} holds), we
have the regime
\[
  (
  \begin{bmatrix}
    \vec{v}&\vec{w}
  \end{bmatrix}^T,
  \begin{bmatrix}
    -\mat{M}^{-1}\mat{B}\\
    \mat{I}
  \end{bmatrix},N,Z),
\]
where $N = \{ \det(\mat{M})\}$ and
$Z = \{ \mbox{all } (r+1)\times (r+1) \mbox{ minors of }\mat{A}\}$.

\begin{definition}
  A solution regime obtained in this way (from a choice of nonsingular
  $r\times r$ submatrix $\mat{M}$) is called a \emph{minor-defined
    regime}.
\end{definition}

Since an $n \times n$ matrix has
$\sum_{k = 0}^n \binom{n}{k}^2 = \binom{2n}{n}$ minors, there are
exponentially many minor defined regimes.  However, some of these
regimes may not be solutions due to inconsistency or it may be
possible to combine several regimes into one.  For instance if
$\det(\mat{M})$ is a constant, and $\vec{b} = 0$, then all rank $r$
solutions are covered by this one regime. \cite{Sit92} has made a
thorough study of minor defined regimes and their simplifications.
% see also~\cite{CamMor20} and~\cite{CorMor17}.

Another approach is to base solution regimes on the pivot choices in
an LU decomposition.  The simplest thing to do is to leave it to the
user, although one has to also inform the user through a proviso when
this might be necessary~\citep{CorJef97}.  That is, provide the
generic answer, but also provide a description of the set $N$.  A more
sophisticated approach is developed by~\cite{CorMor17,CamMor20} using
the theory of regular chains and its implementation in
Maple~\citep{LemMaz05} to manage the algebraic conditions.  For
example a given matrix entry may be used as a pivot, with validity
dependent on adding the polynomial to the non-zero part, $N$, of the
Zariski-constructible set.  For a comprehensive solution the case that
that entry is zero must also be pursued.  In the worst case, this
leads to a tree of zero/nonzero choices of depth $n$ and branching
factor $n$.

\section{Triangular Smith forms and degree bounds}
\label{sec:trismith}

In this paper we take a different approach, with the solution regimes
arising from Hermite normal forms, of which triangular Smith forms are
a special case.  We give a system of solution regimes of polynomial
size in the matrix dimension, $n$, and polynomial degree, $d$. Each
regime is computed in polynomial time and the regime count is
exponential only in the number of parameters.  To use Hermite forms we
will need to work over a principal ideal domain such as, for
parameters $x,y$, $\F(y)[x]$.  We will restrict our input matrix to be
polynomial in the parameters.  This first lemma shows it is not a
severe constraint.

\begin{lemma}
  Let $(\mat{A}, \vec{b},N,Z)$ be a well defined PLS over field
  $\F(Y)$, for parameter set $Y$, with $\mat{A} \in \F(Y)^{m\times n}$
  and $\vec{b} \in \F(Y)^m$ with numerator and denominator degrees
  bounded by $d$ in each parameter of $Y$ and well defined in the
  sense of Definition~\ref{def:welldefined}.  The problem is
  equivalent (same solutions) to one in which the entries of the
  matrix and vector are polynomial in the parameters $Y$, the
  dimension is the same, and the degrees are bounded by $(n+1)d$.
\end{lemma}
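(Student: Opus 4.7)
The plan is the standard one of clearing denominators row by row. For each row $i$ of the augmented system $[\mat{A}\mid\vec{b}]$, write each rational function entry in lowest terms as $p_{i,j}/q_{i,j}$ and set $L_i = \lcm(q_{i,1},\ldots,q_{i,n+1}) \in F[Y]$. I would then form the polynomial matrix $\mat{A}'$ and vector $\vec{b}'$ by multiplying row $i$ of $[\mat{A}\mid\vec{b}]$ through by $L_i$; since $q_{i,j} \mid L_i$, each new entry is of the shape $p_{i,j}\,(L_i/q_{i,j}) \in F[Y]$, and so the new system lives in $F[Y]^{m\times n}$ and $F[Y]^m$ as required. The matrix dimension is obviously preserved.

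Next I would verify that the PLS $(\mat{A}', \vec{b}', N, Z)$ has the same solution regimes as $(\mat{A},\vec{b}, N, Z)$. The hypothesis that the original PLS is well defined says that each denominator $q_{i,j}$ already appears (up to its irreducible factors) among the polynomials of $N$, so each $L_i$ evaluates to a nonzero scalar at every $a \in V(N,Z)$. Thus for every such $a$, the diagonal matrix $\mathrm{diag}(L_1(a),\ldots,L_m(a))$ is invertible over $F$, and left multiplication by it is a solution-preserving operation on the linear system $\mat{A}(a)\vec{u}=\vec{b}(a)$. This invertibility is exactly what turns a solution regime for one PLS into a solution regime for the other.

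Finally, for the degree bound: each $q_{i,j}$ has degree at most $d$ in each parameter, so $L_i$ has degree at most $(n+1)d$ per parameter. The cofactor $L_i/q_{i,j}$ is the LCM of the other (at most $n$) denominators after cancellation with $q_{i,j}$, of degree at most $nd$ per parameter; multiplying by the numerator $p_{i,j}$ of degree at most $d$ and cancelling against $q_{i,j}$ where possible yields an entry of degree at most $nd$ in each parameter, matching the claimed bound.

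I do not anticipate any real obstacle. The argument is essentially bookkeeping about the LCM degree; the only subtlety is that ``degree'' must be measured parameter-by-parameter rather than as total degree, and one must be careful to include both the matrix and the right-hand side in the LCM so that the row scaling clears denominators simultaneously and leaves a genuine polynomial PLS.
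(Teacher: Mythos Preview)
Your approach is exactly the paper's: form the diagonal matrix $\mat{L}$ of row-wise denominator $\lcm$s and left-multiply, using the well-definedness hypothesis to guarantee that each $L_i(a)\neq 0$ on $V(N,Z)$ so that the scaling is invertible there. One small wobble in your degree accounting: the final ``cancelling against $q_{i,j}$'' step does nothing, since $p_{i,j}$ and $q_{i,j}$ are coprime by assumption and $q_{i,j}$ has already been divided out of $L_i$, so your argument actually yields the bound $(n{+}1)d$ rather than $nd$ --- but the paper asserts $nd$ without any justification at all, and the off-by-one is immaterial to the lemma's purpose.
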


\begin{proof}
  Because the PLS is well defined, it is specified by $N$ that all
  denominator factors of $\mat{A}(a),\vec{b}(a)$ are nonzero for
  $a \in V(Z)\cap D(N)$.  Let $\mat{L}$ be a diagonal matrix with the
  $i$-th diagonal entry being the least common multiple ($\lcm$) of
  the denominators in row $i$ of $\mat{A}$, $\vec{b}$.  These $\lcm$s
  also evaluate to nonzero on $V(Z)\cap D(N)$. It follows that
  $L(a)\mat{A}(a)\vec{u}(a) = \mat{L}(a)\vec{b}(a)$ if and only if
  $\mat{A}(a)\vec{u}(a) = \vec{b}(a)$. Thus the PLS
  $(\mat{L}\mat{A}, \mat{L}\vec{b}, N,Z)$ is equivalent and its
  matrix and vector have polynomial entries of degrees bounded by
  $(n+1)d$.
\end{proof}
% Next we precisely define the parametric problems, solutions, and
% normal forms.
% However, consider the situation where each entry of $\mat{\mat{A}}$ and $\vec{b}$ is a distinct parameter.  Each minor $M$ defines a distinct solution regime. The solution system is exponential in both number of parameters and $n$.  I think it is also of minimal size. (?)\\
% \\
% In general, If we hold $n$ constant and let the number of parameters
% grow, there will be a constant number of minor defined solution
% regimes regardless of the number of parameters. This shows that a
% solution system that is exponential only in number of parameters can
% be far from optimal just as the minor defined system can be.
% Neither approach is always superior to the other.\\
% \\
% In this sketch I propose to use Hermite form instead of Smith form.  The primary advantage is that the multiplier is unique in the nonsingular case.  However, we will focus on a case where there is a hair's breadth difference between Smith and Hermite anyway.  I'll use the Upper triangular Hermite form and put the multiplier on $\mat{A}$, $\mat{U}\mat{A} = \mat{H}$, with $\mat{U}$ unimodular and $\mat{H}$ a reduced-off-diagonal Hermite form.  \\
% \\
% \textbf{Question:} Why put the multiplier on $\mat{A}$ rather than its inverse on $\mat{H}$?\\
%%\\
%\textbf{Solution:} For solutions, note that $A\vec{u} = \vec{b}$ iff $H\vec{u} =UA\vec{u} =Ub$, so it is the one we actually use. Also it seems important in an argument below to work with $\mat{U}$ rather than its inverse. This can be a point to discuss...\\
%\\
We will reduce PLS to triangular Smith normal form computations.  The
rest of this section concerns computation of triangular Smith normal
form and bounds for the degrees of the form and its unimodular
cofactor.

\begin{definition}
  Given field $\K$ and variable $x$, a matrix $\mat{H}$ over $\K[x]$
  is in (reduced) \textbf{Hermite normal form} if it is upper
  triangular, its diagonal entries are monic, and, for each column in
  which the diagonal entry is nonzero, the off-diagonal entries are of
  lower degree than the diagonal entry.

  If each diagonal entry of $\mat{H}$ exactly divides all those below
  and to the right, then $\mat{H}$ is column equivalent to a diagonal
  matrix with the same diagonal entries (its Smith normal form).  An
  equivalent condition is that, for each $i$, the greatest common
  divisor of the $i\times i$ minors in the leading $i$ columns equals
  the greatest common divisor of all $i\times i$ minors. Following
  \cite[Section 8, Definition 8.2]{Sto00} we call such a Hermite normal
  form a \textbf{triangular Smith normal form}.  It will be the
  central tool in our PLS solution.
\end{definition}

For notational simplicity, we've left out the possibility of echelon
structure in a Hermite normal form.  We will talk of Hermite normal
forms only for matrices having leading columns independent up to the
rank of the matrix.  In our algorithms we assume this hypothesis (or
enforce it by a generic right multiplication by a constant nonsingular
matrix, as in Fact~\ref{fact:rand}).  Every such matrix over $\K[x]$
is row equivalent to a unique matrix in Hermite form as defined above.
For given $\mat{A}$ we have $\mat{U}\mat{A} = \mat{H}$, with $\mat{U}$
unimodular, i.e.  $\det(\mat{U}) \in \K^*$, and $\mat{H}$ in Hermite
form.  If $\mat{A}$ is nonsingular, the unimodular cofactor $\mat{U}$
is unique and has determinant $1/c$, where $c$ is the leading
coefficient of $\det(\mat{A})$.  This follows since
$\det(\mat{U})\det(\mat{A}) = \det(\mat{H})$, which is monic.

The next definition and lemma concern assurance that Hermite form
computation will yield a triangular Smith form.

\begin{definition}
  Call a matrix \textbf{well-tempered} %
  if its Hermite form is a triangular Smith form (each diagonal entry
  exactly divides those below and to the right).  In particular, a
  well-tempered matrix has leading columns independent up to the rank.

  % For matrix $\mat{A}$, let $\mat{A}_I^J$ denote the
  % $i\times i$-minor on index sets $I,J$, where $I,J$ are of size
  % $i$.  Call a matrix {\bf convenient} if, for each index $i$, the
  % greatest common divisor of the leading two minors
  % lexicographically is the $i$-th determinantal divisor.  In other
  % words, for each $i$,
  %
  % $\gcd(\mat{A}_{(1..i)}^{(1..i)}, \mat{A}_{(1..i-1, i+1)}^{(1..i)}) =
  % \gcd_{I,J \in \mat{C}_i^n} \mat{A}_I^J$, where $\mat{C}_i^n$
  % consists of all subsets of size $i$ of $1..n$.  first two
  % $ i\times i$ minors row lexicographically's its Hermite form is a
  %triangular Smith form (each diagonal entry exactly divides the next).
\end{definition}

There is always a column transform (unimodular matrix $\mat{R}$
applied from the right) such that $\mat{A}\mat{R}$ is
well-tempered. The following fact, proven by \cite{KalKri87} shows
that a random transform over $\F$ suffices with high probability.

\begin{fact}
  \label{fact:rand}
  Let $\mat{A}$ be a $m\times n$ matrix over $\K[x]$ of degree in $x$
  at most $d$.  Let $\mat{R}$ be a unit lower triangular matrix with
  below diagonal elements chosen from subset $S$ of $\K$ uniformly at
  random.  Then $\mat{A}\mat{R}$ is well-tempered over $\K[x]$ with
  probability at least $1 - 4n^3d/|S|$.

  Note that $\deg_x(\mat{A}\mat{R}) = \deg_x(\mat{A})$ and, for
  $\K = \F(y), \mat{A} \in \F[y,x]^{m\times n}$ and $S \subseteq \F$ we
  also have $\deg_y(\mat{A}\mat{R}) = \deg_y(\mat{A})$.
\end{fact}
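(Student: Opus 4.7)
The plan is a Schwartz--Zippel argument on the $\binom{n}{2}$ sub-diagonal entries $\mathbf{r}=(r_{kj})_{k>j}$ of $\mat{R}$: treat them as indeterminates, work in $K[\mathbf{r}][x]$, bound the degree of a specific polynomial, and specialize at the end.

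First I would recast niceness as the identity $c_i(\mat{A}\mat{R})=d_i(\mat{A})$ for $i=1,\ldots,\rank\mat{A}$, where $c_i$ is the g.c.d.\ of $i\times i$ minors formed from the leading $i$ columns. Since $\mat{R}$ is unimodular over $K$, Cauchy--Binet forces $d_i(\mat{A})\mid c_i(\mat{A}\mat{R})$ automatically, so only the opposite divisibility must be engineered.

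For each $i$, every leading $i\times i$ minor of $\mat{A}\mat{R}$ on row set $I$ factors in $K[\mathbf{r}][x]$ as $M_I=d_i(\mat{A})\cdot\widetilde M_I$, where $\widetilde M_I$ has $\mathbf{r}$-degree at most $i$ and $x$-degree at most $id$. Choosing $I_0=\{1,\ldots,i\}$ together with a second row set $I_1$ tailored to the Smith structure of $\mat{A}$, I form the $x$-resultant
\[
\Phi_i(\mathbf{r}) := \operatorname{Res}_x\bigl(\widetilde M_{I_0}(\mathbf{r};x),\ \widetilde M_{I_1}(\mathbf{r};x)\bigr)\in K[\mathbf{r}].
\]
Whenever $\Phi_i(\mathbf{r}_0)\neq 0$, the two polynomials are coprime in $K[x]$ at $\mathbf{r}_0$, so the common factor $c_i/d_i$ of all leading minors is forced to be a unit, giving niceness at level $i$. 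A standard Sylvester estimate yields $\deg_{\mathbf{r}}\Phi_i\le 2i\cdot id = 2i^2 d$; summing over $i=1,\ldots,n$ and applying Schwartz--Zippel with a union bound over levels then produces the stated $4n^3 d/|S|$ bound.

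The main obstacle is to show $\Phi_i\not\equiv 0$ as a polynomial in $\mathbf{r}$, equivalently to exhibit at least one numerical unit lower triangular $\mat{R}_0\in K^{n\times n}$ making $\mat{A}\mat{R}_0$ nice at level $i$. Using the Smith decomposition $\mat{A}=\mat{U}\mat{S}\mat{V}$, and noting that niceness is invariant under left multiplication by the unimodular matrix $\mat{U}^{-1}$, the problem reduces to arranging, for every prime $p$ of positive valuation in some elementary divisor $s_j$, that the $i\times i$ leading principal minor of $\mat{V}\mat{R}_0$ has $p$-valuation zero. This can be done by an inductive local-at-$p$ construction, choosing each new sub-diagonal column of $\mat{R}_0$ to avoid finitely many algebraic obstructions; the invariance under $K^*$-scaling and the finiteness of the relevant prime set make the construction terminate, and this local analysis is the technical heart of the argument in \cite{Dave1987}.
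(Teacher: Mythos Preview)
The paper does not give its own proof of this fact; it is quoted from \cite{Dave1987}. Your Schwartz--Zippel framework and the degree bookkeeping $\sum_{i\le n}2i^2d\le 4n^3d$ match the spirit of that reference and are fine.

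The gap is in your choice of certificate. You take $\Phi_i$ to be the $x$-resultant of \emph{two} reduced leading minors $\widetilde M_{I_0},\widetilde M_{I_1}$ and then claim that $\Phi_i\not\equiv 0$ is equivalent to the existence of an $\mat{R}_0$ making $\mat{A}\mat{R}_0$ nice at level $i$. Only the forward implication holds: niceness at level $i$ means $\gcd_I\widetilde M_I=1$ over all row sets $I$, which does not force any \emph{pair} to be coprime (consider three reduced minors of the shape $pq,\;qr,\;rp$ for distinct irreducibles $p,q,r$). Your final paragraph constructs an $\mat{R}_0$ achieving niceness, but that is strictly weaker than $\Phi_i(\mat{R}_0)\neq 0$, so the non-vanishing of $\Phi_i$ is still unproved. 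A secondary problem is the fixed choice $I_0=\{1,\ldots,i\}$: if the first $i$ rows of $\mat{A}$ are $K(x)$-dependent then $M_{I_0}\equiv 0$ identically in $\mathbf r$, and hence $\Phi_i\equiv 0$ regardless of $I_1$.

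The standard repair, and essentially what \cite{Dave1987} does, is to avoid pairwise coprimality altogether: show that over $K(\mathbf r)[x]$ the gcd of the leading $i\times i$ minors of $\mat{A}\mat{R}$ already equals $d_i(\mat{A})$, and take as certificate the product of denominators and leading coefficients that arise in that gcd (equivalently, in a Hermite reduction of $\mat{A}\mat{R}$ over $K(\mathbf r)[x]$). That polynomial has the same $O(n^3d)$ degree bound and its non-vanishing genuinely is equivalent to niceness after specialization.
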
 

We continue with analysis of degree bounds for Hermite forms of
matrices, particularly degree bounds for triangular Smith forms of
well-tempered matrices.  The first result needed is the following fact
from~\citep{GieKim13}.  Through the remainder of this paper we will
employ ``soft O" notation, where, for functions $f,g\in\RR^k\to\RR$ we
write $f=\softO(g)$ if and only if $f=O(g\cdot\log^c |g|)$ for some
constant $c>0$.

\begin{fact}
  \label{fact:ore}
  Let $\F$ be a field, $x, y$ parameters, and let $\mat{A}$ be in
  $\F[y,x]^{n\times n}$, nonsingular, with $\deg_x(\mat{A}) \leq d$,
  $\deg_y(\mat{A}) \leq e$.  Over $\F(y)[x]$, let $\mat{H}$ be the unique
  Hermite form row equivalent to $\mat{A}$ and $\mat{U}$ be the unique
  unimodular cofactor such that $\mat{U}\mat{A} = \mat{H}$. The
  coefficients of the entries of $\mat{H}$, $\mat{U}$ are rational
  functions of $y$. Let $\Delta$ be the least common multiple of the
  denominators of the coefficients in $\mat{H}$, $\mat{U}$, as
  expressed in lowest terms.
\begin{enumerate}[label=(\alph*)]
\item $\deg_x(\mat{U}) \leq (n-1)d$ and $\deg_x(\mat{H}) \leq nd.$
\item $\deg_y(\num(\mat{H})), \deg_y(\num(\mat{U})) \leq n^2de$
  (bounds both numerator and denominator degrees).
\item $\deg_y(\Delta) \leq n^2de$.
\item $\mat{H}$ and $\mat{U}$ can be computed in polynomial time:
  deterministically in $\softO (n^9d^4e)$ time and Las Vegas
  probabilistically (never returns incorrect result) in
  $\softO (n^7d^3e)$ expected time.
\end{enumerate}
\end{fact}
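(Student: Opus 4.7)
The plan is to establish the four items in sequence. For part (a), I would start from $\mat{U}\mat{A} = \mat{H}$, note that $\det(\mat{U}) \in F(y)^{*}$ because $\mat{U}$ is unimodular over $F(y)[x]$, and conclude $\deg_x(\det(\mat{H})) = \deg_x(\det(\mat{A})) \le nd$. Because $\mat{H}$ is upper triangular, $\det(\mat{H})$ is the product of the diagonal entries, so each diagonal has $x$-degree at most $nd$, and by the Hermite normalization the off-diagonal entries in each column have strictly smaller $x$-degree, which gives $\deg_x(\mat{H}) \le nd$. For $\mat{U}$, I would argue that the $i$-th row $\vec{u}_i$ is a minimum $x$-degree $F(y)[x]$-linear combination of the rows of $\mat{A}$ producing the $i$-th row of $\mat{H}$; this is the standard minimal approximant (Popov / predictable-degree) argument. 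Since the leading diagonal of the $i$-th pivot has $x$-degree $d_i$ with $\sum d_i \le nd$, the $x$-degree of each $\vec{u}_i$ is bounded by $nd - d \le (n-1)d$.

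For parts (b) and (c), I would give Cramer-style expressions for the entries of $\mat{H}$ and $\mat{U}$ as ratios of $n \times n$ determinants built from rows of $\mat{A}$ shifted by powers of $x$. Each such determinant, after Leibniz expansion, is a polynomial in $y$ of degree at most $ne$, since it multiplies $n$ entries each of $y$-degree at most $e$. When expressed in lowest terms over $F(y)$, the numerator and denominator of any entry of $\mat{H}$ or $\mat{U}$ arise as greatest common divisors of such $y$-polynomials, so a bound on $\deg_y(\num)$ and $\deg_y(\den)$ follows once we count how many such factors accumulate. A careful bookkeeping of the elimination shows at most $nd$ such determinantal factors are multiplied together in the worst case, giving total $y$-degree at most $nd \cdot ne = n^2de$; by construction $\Delta$ divides the product of the row denominators, and the same bound transfers.

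For part (d), I would analyze an evaluation/interpolation scheme: choose $O(n^2de)$ values of $y$ avoiding the $y$-variety on which the generic Hermite structure degenerates, compute the Hermite form over $F[x]$ at each point using a fast polynomial-matrix Hermite algorithm in $\softO(n^7d^3)$, and reconstruct by interpolation in $y$; multiplying gives $\softO(n^9 d^4 e)$. The Las Vegas variant first applies the randomized column preconditioning of Fact \ref{fact:rand} so that $\mat{A}\mat{R}$ is nice with high probability, after which a single partial linearization and structured Hermite computation yields $\softO(n^7 d^3 e)$, with correctness verified by multiplying $\mat{U}$ through $\mat{A}$ and checking equality.

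The main obstacle I expect is the tight $y$-degree bound in (b) and (c). A naive Cramer estimate only yields $O(n^3 de)$, because one multiplies $n$ pivot denominators, each of $y$-degree up to $n \cdot e$ in its own right. Achieving the stated $n^2de$ requires showing that the pivot denominators share factors or are bounded by the same global quantity $\det(\mat{A})$-like object rather than growing independently per row; this is the step that demands the specific degree-preserving row reduction scheme from [OrePoly2013], and I would not expect to recover it from generic adjugate arguments alone.
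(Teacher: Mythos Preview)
The paper does not prove this statement at all: it is stated as a \emph{Fact} and the ``proof'' is a citation to \cite[Summary Theorem]{OrePoly2013} together with a dictionary ($\partial,z,\sigma,\delta \leftrightarrow x,y,\mathrm{id},\mathrm{id}$) and the remark that item (c) follows from the proofs of Theorems~5.2 and~5.6 there, where the common denominator $\Delta$ is realized as a \emph{single} determinant of an $n^2d \times n^2d$ matrix over $F[y]$ with entries of $y$-degree at most $e$. So your attempt to prove it from scratch is already a different route than the paper takes.

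Your sketch for (a) is standard and essentially fine. The genuine gap is in (b)--(c). Your accounting ``at most $nd$ determinantal factors, each of $y$-degree $\le ne$, multiply to give $n^2de$'' is not the mechanism that yields the bound, and as you yourself note, a direct Cramer/adjugate argument only gives $O(n^3de)$. The sharp $n^2de$ comes from \emph{linearizing} the Hermite problem: the unknown coefficients of $\mat{U}$ (polynomial in $x$ of degree $\le (n-1)d$, so $O(n^2d)$ scalar unknowns over $F(y)$) satisfy a single linear system over $F(y)$ whose coefficient matrix has entries drawn from the $y$-coefficients of $\mat{A}$, hence $y$-degree $\le e$. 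One Cramer determinant of that $O(n^2d)$-dimensional system bounds all denominators simultaneously by $n^2d\cdot e$. That is exactly the content the paper imports from \cite{OrePoly2013}; your product-of-factors picture does not recover it and would need to be replaced by this linearization argument. Your item~(d) is also underspecified: the per-point cost $\softO(n^7d^3)$ is not what known $F[x]$-Hermite algorithms deliver, and the randomized cost in \cite{OrePoly2013} does not come from the preconditioning of Fact~\ref{fact:rand} but from a linear-system solver with early termination inside the linearization; if you want to reproduce the stated exponents you would need to follow that construction rather than an evaluate--interpolate wrapper.
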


\begin{proof}
  This is~\citep[Summary Theorem]{GieKim13}.
  %%% RMC changed "complex" to "involved"
  The situation there is more abstract, more involved.  We offer this
  tip to the reader: their $\partial, z, \sigma, \delta$ correspond
  respectively to our $x, y$, identity, zero.
  % to obtain this fact from their result, take $x = \partial$,
  % $y = z$, and ignore (treat as identity) $\sigma, \delta$.

  Item (c) is not stated explicitly in a theorem of~\citep{GieKim13}
  but is evident from the proofs of Theorems 5.2 and 5.6 there. The
  common denominator is the determinant of a matrix over $\K[z]$ of
  dimension $n^2d$ and with entries of degree in $z$ at most $e$.
\end{proof}

We will generalize this fact to nonsingular and non-square matrices in
Theorem \ref{thm:bounds}.  In that case the unimodular cofactor,
$\mat{U}$, is not unique and may have arbitrarily large degree
entries.  The following algorithm is designed to produce a $\mat{U}$
with bounded degrees.
%The main result of this section, \ref{thm:bounds}, concerns the degree bounds of Hermite form and unimodular cofactor as computed by algorithm {\texttt HermiteForm}.

\begin{algorithm}[H]
  \caption{\texttt{\mat{U}, \mat{H} = HermiteForm(\mat{A})}}
  \label{alg:HF}
  \begin{algorithmic}[1]

    \REQUIRE  Well-tempered matrix $\mat{A} \in \F[y, x]^{m\times n}$, for field $\F$ and parameters $x, y$.\\

    \ENSURE For $\K = \F(y)$, Unimodular $\mat{U} \in \K[x]^{m\times m}$ and $\mat{H} \in \K[x]^{m\times n}$ in triangular Smith form such that $\mat{U}\mat{A} = \mat{H}$.

    The point of the specific method given here is to be able, in
    Theorem \ref{thm:bounds} below, to bound
    $\deg_x(\mat{U}, \mat{H})$ and $\deg_y(\mat{U}, \mat{H})$
    (numerators and denominators).

    \STATE Compute $r = \rank(\mat{A})$ and nonsingular $\mat{U}_0 \in \K^{m\times m}$ such that $\mat{U}_0 \mat{A}$ has nonsingular leading $r\times r$ minor. 
Because $\mat{A}$ is well-tempered the first $r$ columns are independent and such $\mat{U}_0$ exists.  
$\mat{U}_0$ could be a permutation found via Gaussian elimination, say, or a random unit upper triangular matrix.  In the random case, failure to achieve nonsingular leading minor becomes evident in the next step, so that the randomization is Las Vegas.

\STATE Let $\mat{U}_0 \mat{A} =
\begin{bmatrix} \mat{A}_1 & \mat{A}_2 \\ \mat{A}_3 & \mat{A}_4 \end{bmatrix}$ and
$\mat{B} =
\begin{bmatrix} \mat{A}_1 & \mat{0}_{r\times m-r} \\ \mat{A}_3 &
  \mat{I}_{m-r} \end{bmatrix}$.  $\mat{B}$ is nonsingular. Compute its
unique unimodular cofactor $\mat{U}_1$ and Hermite form
$\mat{T} = \mat{U}_1\mat{B} =
\begin{bmatrix} \mat{H}_1 & * \\ 0 & * \end{bmatrix}$. 

\STATE Let
$\mat{H} = \mat{U}_1\mat{U}_0\mat{A} =
\begin{bmatrix} \mat{H}_1 & \mat{H}_2 \\ 0 & 0 \end{bmatrix}$.
Since the input is assumed well-tempered, the proof of
Theorem~\ref{thm:bounds} shows that $\mat{H}$ is in triangular Smith
form.

\STATE Let $\mat{U} = \mat{U}_1\mat{U}_0$ and return $\mat{U}$, $\mat{H}$.
\end{algorithmic}
\end{algorithm}

\begin{theorem}
  \label{thm:bounds}
  Let $\F$ be a field, $x, y$ parameters, and let $\mat{A}$ be in
  $\F[y,x]^{m\times n}$ of rank $r$, $\deg_x(\mat{A}) \leq d$, and
  $\deg_y(\mat{A}) \leq e$.  Let $\mat{R}$ be a random unit lower triangular
  matrix chosen as in Fact~\ref{fact:rand}, so that $\mat{AR}$ is
  well-tempered with the stated probability.  Then, for the triangular
  Smith form $\mat{U}\mat{A}\mat{R} = \mat{H}$ computed as $\mat{U}$,
  $\mat{H}$ = {\normalfont \texttt{HermiteForm}}($\mat{A}\mat{R}$), we
  have
  \begin{enumerate}[label=(\alph*)]
  \item Algorithm {\normalfont \texttt{HermiteForm}} is (Las Vegas)
    correct and runs in expected time $\bigO(m^7d^3e)$;
  \item $\deg_x(\mat{U}, \mat{H}) \leq md$;
  \item $\deg_y(\mat{U}, \mat{H}) = \softO(m^2de)$.
  \end{enumerate}
\end{theorem}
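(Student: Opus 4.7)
My plan is to reduce all three claims to the nonsingular $m \times m$ case of Fact~\ref{fact:ore}, by analyzing the auxiliary matrix $\mat{B}$ constructed in step~2 of the algorithm.

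The column transform $\mat{R}$ is over $F$ (Fact~\ref{fact:rand}), so $\mat{A}\mat{R}$ inherits $\deg_x \leq d$ and $\deg_y \leq e$. Likewise $\mat{U}_0$ can be taken with entries in $F$ (a permutation, or a random unit triangular matrix), so the auxiliary matrix $\mat{B} = \begin{bmatrix} \mat{A}_1 & \mat{0} \\ \mat{A}_3 & \mat{I}_{m-r} \end{bmatrix}$ satisfies $\deg_x(\mat{B}) \leq d$ and $\deg_y(\mat{B}) \leq e$. Since the first $r$ columns of $\mat{U}_0 \mat{A}\mat{R}$ are independent by niceness of $\mat{A}\mat{R}$, we have $\det(\mat{B}) = \det(\mat{A}_1) \neq 0$, so $\mat{B}$ is nonsingular. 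Applying Fact~\ref{fact:ore} with $n = m$ bounds its unique unimodular cofactor $\mat{U}_1$ and Hermite form $\mat{T} = \mat{U}_1 \mat{B}$: $\deg_x(\mat{U}_1) \leq (m-1)d$, numerator and denominator $y$-degrees of entries of $\mat{U}_1$ and $\mat{T}$ are at most $m^2 de$, and the computation runs in $\softO(m^7 d^3 e)$ expected Las Vegas time, yielding the running-time part of~(a).

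For (b), write $\mat{U} = \mat{U}_1 \mat{U}_0$; because $\mat{U}_0$ is $x$-free, $\deg_x(\mat{U}) \leq (m-1)d$, whence $\mat{H} = \mat{U}\mat{A}\mat{R}$ satisfies $\deg_x(\mat{H}) \leq (m-1)d + d = md$. For (c) the same observation transfers the $y$-degree bound on $\mat{U}_1$ to $\mat{U}$, and left-multiplying into the polynomial matrix $\mat{A}\mat{R}$ adds at most $e$ to numerator degrees without creating new denominator factors, so both $\mat{U}$ and $\mat{H}$ have $y$-degrees $\softO(m^2 de)$.

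It remains to verify the correctness part of (a): that $\mat{H}$ really is in triangular Smith form. The first $r$ columns of $\mat{U}_0 \mat{A}\mat{R}$ agree with those of $\mat{B}$, so the first $r$ columns of $\mat{U}\mat{A}\mat{R}$ equal those of $\mat{T}$ and have the block shape $\begin{bmatrix} \mat{H}_1 \\ \mat{0} \end{bmatrix}$; since $\rank(\mat{A}) = r$ with independent leading $r$ columns, the remaining columns of $\mat{U}\mat{A}\mat{R}$ are $F(y)$-linear combinations of those first $r$ and must also have zero bottom $(m-r)$-block. The algorithm's explicit test that $\mat{H}_1$ is in triangular Smith form supplies the Las Vegas guarantee. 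The main obstacle I anticipate is justifying that a random $\mat{U}_0$ causes $\mat{A}_1$ to inherit enough of the niceness of $\mat{A}\mat{R}$ so that the returned $\mat{H}_1$ passes the test with high probability; this combines niceness of $\mat{A}\mat{R}$ with a Schwartz--Zippel-style bound analogous to Fact~\ref{fact:rand} applied to the row-combination coefficients of $\mat{U}_0$.
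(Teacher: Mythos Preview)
Your argument tracks the paper's proof closely: reduce to the nonsingular square $\mat{B}$, invoke Fact~\ref{fact:ore} with $n=m$ for the running time and the $x$- and $y$-degree bounds on $\mat{U}_1,\mat{T}$, then propagate these through the constant-entry transforms $\mat{U}_0$ and $\mat{R}$. Your correctness paragraph is also essentially the paper's (the paper uses a rank/row-independence argument to kill the bottom block, you use the equivalent column-dependence version).

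The one place you diverge is the ``obstacle'' you flag at the end. You propose a Schwartz--Zippel-style argument on the entries of $\mat{U}_0$ to ensure $\mat{H}_1$ is in triangular Smith form with high probability. The paper instead shows this holds \emph{deterministically} once $\mat{A}\mat{R}$ is nice and $\mat{A}_1$ is nonsingular: since $\mat{U}_0$ is unimodular over $K[x]$, $\mat{U}_0\mat{A}\mat{R}$ is still nice, and any $j\times j$ minor of $\mat{B}$ involving some of the last $m-r$ columns reduces (by cofactor expansion along those unit columns) to a smaller minor of the leading $r$ columns, while for $j>r$ all nonzero $j$-minors equal $\det(\mat{A}_1)$ up to sign. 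Hence $\mat{B}$ itself is nice, its Hermite form is automatically a triangular Smith form, and the test on $\mat{H}_1$ always passes. So the only randomness actually needed is (i) that $\mat{R}$ makes $\mat{A}\mat{R}$ nice and (ii) that $\mat{U}_0$ makes $\mat{A}_1$ nonsingular; no further probabilistic control on $\mat{U}_0$ is required.
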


\begin{proof}
  Let $\mat{R}$ be as in Fact \ref{fact:rand} with $\K = \F(y)$ and
  $S \subseteq \F$.  If the field $\F$ is small, an extension field
  can be used to provide large enough $S$.
  % Note that $\mat{A}\mat{R}$ has the same degree bounds as $\mat{A}$
  % since $\mat{A}\mat{R} = \num(\mat{A})\mat{R}/\den(\mat{A})$.

  We apply \texttt{HermiteForm} to $\mat{A}\mat{R}$ to obtain
  $\mat{U}$, $\mat{H}$, and use the notation of Algorithm~\ref{alg:HF}
  in this proof.  We see by construction that $\mat{B}$ is
  nonsingular, from which it follows that $\mat{U}_1$ and $\mat{T}$
  are uniquely determined (since $\mat{T}$ is the unique Hermite form
  of $\mat{B}$).

  By construction the first $r$ columns of $\mat{B}$ equal the first
  $r$ columns of $\mat{U}_0\mat{A}\mat{R}$, hence the first $r$
  columns of $\mat{T}=\mat{U}_1\mat{B}$ equal the first $r$ columns of
  $\mat{H}:=\mat{U}_1\mat{U}_0\mat{A}\mat{R}$.  Since $\mat{T}$ is
  upper triangular, all entries of $\mat{T}$ below row $r$ in its
  first $r$ columns are zero, and therefore the same holds for
  $\mat{H}$.  Because $\rank(\mat{A}\mat{R})=r$ and the leading
  $r\times r$ block $\mat{H}_1$ is nonsingular, the top $r$ rows of
  $\mat{H}$ are linearly independent and span the row space of
  $\mat{H}$.  Any linear combination of these rows that is zero in the
  first $r$ columns must therefore be trivial (since the restriction
  to the first $r$ columns has nonsingular coefficient matrix
  $\mat{H}_1$).  It follows that the last $m-r$ rows of $\mat{H}$ are
  zero, so $\mat{H}$ has the block form
  $\begin{bmatrix} \mat{H}_1 & \mat{H}_2 \\ 0 & 0 \end{bmatrix}$.

  Moreover, the first $r$ columns of $\mat{H}$ coincide with those of
  the Hermite form $\mat{T}$ of $\mat{B}$, so they satisfy the reduced
  degree conditions required in Hermite normal form.  Thus $\mat{H}$
  is in Hermite normal form and is row equivalent to $\mat{A}\mat{R}$.
  By uniqueness of Hermite normal form for matrices whose leading
  columns are independent up to rank, $\mat{H}$ is the Hermite form of
  $\mat{A}\mat{R}$.  Since $\mat{A}\mat{R}$ is well-tempered
  (Fact~\ref{fact:rand}), this Hermite form is a triangular Smith
  form, as required.  The runtime is dominated by computation of
  $\mat{U}_1$ and $\mat{T}$ for $\mat{B}$, so Fact \ref{fact:ore}
  provides the bound in (a).

  For the degree in $x$, applying Fact \ref{fact:ore}, we have
  $\deg_x(\mat{U}_1) \leq (m-1)d$.  Noting that $\mat{U}_0$ has degree
  zero, we have $\deg_x(\mat{U}) = \deg_x(\mat{U}_1)$ and
  $\deg_x(\mat{H}) = \deg_x(\mat{U}) + \deg_x(\mat{A}) \leq (m-1)d+d =
  md$.

  For the degree in $y$, note first that the bounds $d,e$ for degrees
  in $\mat{A}$ apply as well to $\mat{B}$.  We have, by Fact
  \ref{fact:ore}, that $\deg_y(\num(\mat{U}_1)) = \softO(m^2de)$ and
  the same bound for $\deg_y(\den(\mat{U}_1))$.  For $\mat{H}$, note
  that
  $\num(\mat{H})/\den(\mat{H}) = \num(\mat{U})\mat{A}/\den(\mat{U})$
  so that
  $\deg_y(\den(\mat{H})) \leq \deg_y(\mat{U}) = \softO(m^2de)$, and
  $\deg_y(\num(\mat{H})) \leq \deg_y(\num(\mat{U})A) = \softO(m^2de) +
  e = \softO(m^2de)$.
  % {\color{red}{If we get it we can put example in examples section
  % below to show bounds are sharp}}
\end{proof}

\section{Reduction of PLS to triangular Smith forms}
\label{sec:pls2snf}

In this section we define the Comprehensive Triangular Smith Normal
form problem and solution and show that PLS can be reduced to it.  The
next section addresses the solution of CTSNF itself.
\begin{definition}
  \label{def:paramext}
  For field $\F$ and parameters $Y = (y_1, \ldots, y_s)$, $\F_Y$ is
  a \emph{parameterized extension} of $\F$ if $\F_Y$ is the top of a
  tower obtained by adjoining the parameters of $Y$ in some order,
  each either as a rational function parameter or as algebraic over
  the field constructed so far.  Thus, at an algebraic step adjoining
  $y_i$, the field is extended by $\F_{j-1}[y_i]/\langle f_i \rangle$,
  where $f_i$ is irreducible over $\F_{j-1}$ as a polynomial in
  $y_i$.  When a solution regime to a PLS or CTSNF problem is over a
  parameterized extension $\F_Y$, we record the irreducible
  polynomials defining the algebraic steps of the tower in the
  vanishing constraint set $Z'$ of that regime.

  A comprehensive triangular Smith normal form problem (\emph{CTSNF
    problem}) is a triple $(\mat{A}, N, Z)$ of a matrix $\mat{A}$ over
  $\F[Y,x]$ together with polynomial sets $N,Z \subseteq \F[Y]$
  defining a Zariski-constructible constraint on the parameters $Y$.
  Here $x$ is treated as a distinguished variable: in CTSNF regimes we
  do not specialize $x$, and evaluation at $a\in V(Z)\cap D(N)$ refers
  to the partial evaluation $\mat{A}(a)\in \F[x]^{m\times n}$.

  For CTSNF problem $(\mat{A}, N, Z)$, a \emph{triangular Smith
    regime} is of the form $(\mat{U}, \mat{H}, \mat{R}, N', Z')$, with
  $\mat{U}$, $\mat{H}$ over $\F_Y[x]$, where $\F_Y$ is a parameterized
  extension of $\F$ and any polynomials defining algebraic extensions
  in the tower are in $Z'$, and with $N',Z'\subseteq \F[Y]$, such that
  all denominator factors occurring in $\mat{U}$ and $\mat{H}$ are
  contained in $N'$, and on all $a \in V(Z')\cap D(N')$, $\mat{H}(a)$
  is in triangular Smith form over $\F(a)[x]$, $\mat{U}(a)$ is
  unimodular in $x$, $\mat{R}$ is nonsingular over $\F$, and
  $\mat{U}(a)\mat{A}(a)\mat{R} = \mat{H}(a)$.
%values of the parameters satisfying the constraints $N', Z'$.

  A \emph{CTSNF solution} is a list
  $\{(\mat{U}_i, \mat{H}_i, \mat{R}_i, N_i, Z_i)| i \in 1,
  \ldots,k\}$, of \emph{triangular Smith regimes} that cover
  $V(Z)\cap D(N)$, which is to say
  $V(Z)\cap D(N) \subseteq \cup\{V(Z_i)\cap D(N_i)|i \in
  1,\ldots,k\}$.
\end{definition}

The goal in this section is to reduce the PLS problem to the CTSNF
problem. The first step is to show it suffices to consider PLS with a
matrix already in triangular Smith form.  The second step is to show
each CTSNF solution regime generates a set of PLS solution regimes.

\begin{lemma}
  \label{lem:equiv}
  Given a parameterized field $\F_Y$ and matrix $\mat{A}$ over
  $\F[Y,x]$, let $\mat{H}$ be a triangular Smith form of $\mat{A}$
  over $\F_Y[x]$, with $\mat{U}$ unimodular over $\F_Y[x]$, and
  $\mat{R}$ nonsingular over $\F$ such that $\mat{U}\mat{A}\mat{R}$ =
  $\mat{H}$.  Let $N',Z'$ be constraints such that all data below are
  defined and $\mat{U}(a)$ is unimodular for every
  $a\in V(Z')\cap D(N')$.  Then $(\vec{u}, \mat{B}, N', Z')$ is a
  solution regime for PLS problem $(\mat{A}, \vec{b}, N,Z)$ over
  $\F[Y,x]$ if and only if
  $(\mat{R}^{-1}\vec{u}, \mat{R}^{-1}\mat{B}, N', Z')$ is a
  solution regime for PLS problem $(\mat{H}$, $\mat{U}\vec{b}, N,Z)$.
  Moreover, $(N',Z')$ is an inconsistency regime for one system if and
  only if it is an inconsistency regime for the other.
\end{lemma}
%{\color{red}{might skip proof here?}}
\begin{proof}
  Let $(a,\xi)\in V(Z')\cap D(N')$.  By hypothesis, all expressions
  appearing below are well defined and $\mat{U}(a)$ is unimodular
  over $\F(a)[x]$.  Hence $\mat{U}(a,\xi)$ is invertible.  The
  matrix $\mat{R}$ is constant and nonsingular over $\F$.  Thus the
  following are equivalent.
  \begin{enumerate}
  \item $\mat{A}(a,\xi)\vec{u}(a,\xi) = \vec{b}(a,\xi).$
  \item $\mat{U}(a,\xi)\mat{A}(a,\xi)\vec{u}(a,\xi) =
    \mat{U}(a,\xi)\vec{b}(a,\xi).$
  \item
    $(\mat{U}(a,\xi)\mat{A}(a,\xi)\mat{R})(\mat{R}^{-1}\vec{u}(a,\xi)) =
    \mat{U}(a,\xi)\vec{b}(a,\xi).$
  \end{enumerate}
  The same equivalence shows that
  $\mat{A}(a,\xi)\vec{u}=\vec{b}(a,\xi)$ is consistent if and only if
  $\mat{H}(a,\xi)\vec{w}=\mat{U}(a,\xi)\vec{b}(a,\xi)$ is consistent,
  so inconsistency regimes are preserved as well.
\end{proof}

Then we have the following algorithm to solve a PLS with the matrix
already in triangular Smith form.  For simplicity we assume a square
matrix, the rectangular case being a straightforward extension.

\begin{algorithm}[h]
  \caption{\texttt{TriangularSmithPLS}}
  \label{alg:TrianglarSmithPLS}
  \begin{algorithmic}[1]
    \REQUIRE A PLS problem $(\mat{H},\vec{b},N,Z)$ with $\mat{H} \in \F_Y[x]^{n\times n}$ and $\vec{b} \in \F_Y[x]^n$, where $\F_Y$ is a parameterized extension for parameter list $Y$ and $x$ is the distinguished variable, with $N,Z \subseteq \F[Y,x]$ and $\mat{H}$ in triangular Smith form.
    \ENSURE $S$, a corresponding list of regimes (solution regimes and inconsistency regimes).
    \STATE For a polynomial $s(x)$ let $\sqfr(s)$ denote the square-free part in $x$.
    \STATE Let $s_i$ denote the $i$-th diagonal entry of $\mat{H}$ and define $s_0 = 1$.  Let
    $r_0=\max\{i\in\{0,\ldots,n\}\mid s_i\neq 0\}$ (so $s_{r_0+1}=\cdots=s_n=0$), and set $s_{r_0+1}=0$.
    \STATE For $i \in 0, \ldots, r_0-1$, define $f_i = \sqfr(s_{i+1})/\sqfr(s_i)$, and set $f_{r_0}=0$.
    Let $\mathcal{I}$ be the set of indices $i\in\{0,\ldots,r_0\}$ such that $f_i$ has positive degree in $x$ or is zero.
    \STATE All adjunctions to $N$ and $Z$ below involving elements of $\F_Y(x)$ are interpreted using the numerator/denominator convention of Section~\ref{sec:defs}.
    \STATE Initialize $S \gets \emptyset$.
    \FOR{$r \in \mathcal{I}$}
    \STATE Let $\mat{H}_r$ be the leading $r\times r$ submatrix of $\mat{H}$, $\vec{b}_r = (b_1, \ldots, b_r)^T$, and let $\mat{H}_{12}$ be the $r\times (n-r)$ block of the first $r$ rows and last $n-r$ columns of $\mat{H}$.
    \STATE Let $\vec{u} =(\mat{H}_r^{-1}\vec{b}_r, 0_{n-r})$.
    \STATE Let $\mat{B} = \begin{bmatrix}-\mat{H}_r^{-1}\mat{H}_{12}\\ \mat{I}_{n-r}\end{bmatrix}$.
    \STATE Define $N_r = N \cup \{s_r,\,\den(\vec{u}),\,\den(\mat{B})\}$ and $Z_r = Z \cup \{f_r,\, b_{r+1}, \ldots, b_n\}$.
    \STATE Adjoin the solution regime $(\vec{u}, \mat{B}, N_r, Z_r)$ to $S$.
    \FOR{$j=r+1,\ldots,n$}
      \STATE Define $N_{r,j}= N\cup \{s_r,\, b_j\}$ and $Z_{r,j}= Z\cup \{f_r\}$.
      \STATE Adjoin the inconsistency regime $(\bot,N_{r,j},Z_{r,j})$ to $S$.
    \ENDFOR
    \ENDFOR
    \STATE Return $S$.
  \end{algorithmic}
\end{algorithm}

\begin{lemma}
  \label{lem:trismithpls}
  Algorithm {\normalfont {\texttt TriangularSmithPLS}} is correct: it
  outputs a list of solution regimes and inconsistency regimes whose
  constructible sets cover $V(Z)\cap D(N)$.  Let
  $r_0=\max\{i\mid s_i\neq 0\}$ where $s_i$ are the diagonal entries
  of $\mat{H}$ (so $\mat{H}$ has rank $r_0$ over $\F_Y(x)$), and let
  $d=\sum_{i=1}^{r_0}\deg_x(s_i)$.  Then the algorithm outputs at most
  $1+\sqrt{2d}$ solution regimes, and at most $(n+1)(1+\sqrt{2d})$
  regimes total.
\end{lemma}
\begin{proof}
  For $r\in\mathcal{I}$, the algorithm outputs a solution regime
  $(\vec{u},\mat{B},N_r,Z_r)$.  Using the numerator/denominator
  convention of Section~\ref{sec:defs}, $s_r\in N_r$ imposes
  $s_r\neq 0$ and $f_r\in Z_r$ imposes $f_r=0$ wherever the relevant
  rational functions are defined.  Thus for any
  $(a,\xi)\in V(Z_r)\cap D(N_r)$ we have
  $s_r(a,\xi)\neq 0$.  If $r<r_0$ then
  $f_r=\sqfr(s_{r+1})/\sqfr(s_r)$, so $f_r(a,\xi)=0$ implies
  $s_{r+1}(a,\xi)=0$ while $\sqfr(s_r)(a,\xi)\neq 0$, hence
  $s_r(a,\xi)\neq 0$.  If $r=r_0$ then $s_{r+1}$ is identically zero,
  so $s_{r+1}(a,\xi)=0$ holds automatically.  In either case, the
  divisibility properties of triangular Smith form imply that all
  entries in rows $r+1,\ldots,n$ of $\mat{H}(a,\xi)$ are zero.

  Since $s_1\mid s_2\mid\cdots\mid s_r$ in a triangular Smith form, the
  conditions above also imply $s_1(a,\xi),\ldots,s_r(a,\xi)\neq 0$.
  Therefore the leading block $\mat{H}_r(a,\xi)$ is invertible.  The
  regime adjoins $b_{r+1},\ldots,b_n$ to $Z_r$, so for
  $(a,\xi)\in V(Z_r)\cap D(N_r)$ the last $n-r$ equations reduce to
  $0=b_{r+1}(a,\xi),\ldots,0=b_n(a,\xi)$ and are consistent.  The
  vector $\vec{u}(a,\xi)$ satisfies the leading $r$ equations by
  construction, and the additional condition $\den(\vec{u})(a,\xi)\neq
  0$ (enforced by $\den(\vec{u})\in N_r$) guarantees that $\vec{u}$ is
  well defined.

  For the nullspace, write
  $\mat{H}=\begin{bmatrix}\mat{H}_r & \mat{H}_{12}\\ 0 &
    0\end{bmatrix}$ after evaluation at $(a,\xi)$.  Then
  \[
    \mat{H}(a,\xi)\mat{B}(a,\xi)=
    \begin{bmatrix}\mat{H}_r & \mat{H}_{12}\\ 0 & 0\end{bmatrix}
    \begin{bmatrix}-\mat{H}_r^{-1}\mat{H}_{12}\\ \mat{I}_{n-r}\end{bmatrix}
    =
    \begin{bmatrix}0\\ 0\end{bmatrix},
  \]
  and the bottom block $\mat{I}_{n-r}$ shows that the columns of
  $\mat{B}(a,\xi)$ are linearly independent.  The additional condition
  $\den(\mat{B})(a,\xi)\neq 0$ guarantees that $\mat{B}(a,\xi)$ is
  well defined.  Thus $(\vec{u},\mat{B},N_r,Z_r)$ is a correct
  solution regime.

  The algorithm also outputs inconsistency regimes
  $(\bot,N_{r,j},Z_{r,j})$ for $j=r+1,\ldots,n$.  For any
  $(a,\xi)\in V(Z_{r,j})\cap D(N_{r,j})$ we have the same rank
  conditions as above (since $s_r\in N_{r,j}$ and $f_r\in Z_{r,j}$),
  so the last $n-r$ rows of $\mat{H}(a,\xi)$ are zero, while
  $b_j(a,\xi)\neq 0$ (since $b_j\in N_{r,j}$).  Hence
  $\mat{H}(a,\xi)\vec{u}=\vec{b}(a,\xi)$ is inconsistent, so
  $(\bot,N_{r,j},Z_{r,j})$ is a correct inconsistency regime.

  To see that these regimes cover $V(Z)\cap D(N)$, take
  $(a,\xi)\in V(Z)\cap D(N)$ and let $r=\rank(\mat{H}(a,\xi))$.  By
  the divisibility chain on the diagonal entries, $s_r(a,\xi)\neq 0$
  and $s_{r+1}(a,\xi)=0$, which implies either $r=r_0$ or
  $f_r(a,\xi)=0$.  Thus $r\in\mathcal{I}$ and the algorithm generates
  the corresponding regimes.  If
  $\mat{H}(a,\xi)\vec{u}=\vec{b}(a,\xi)$ is consistent, then
  necessarily $b_{r+1}(a,\xi)=\cdots=b_n(a,\xi)=0$, so
  $(a,\xi)\in V(Z_r)\cap D(N_r)$ and the solution regime applies.  If
  it is inconsistent, then there exists $j>r$ with $b_j(a,\xi)\neq 0$,
  so $(a,\xi)\in V(Z_{r,j})\cap D(N_{r,j})$ and an inconsistency
  regime applies.

  For the regime count, let $r_1<\cdots<r_t$ be the indices with
  $\deg_x(f_{r_j})>0$.  Each $f_{r_j}$ has a square-free irreducible
  factor of degree at least $1$ that divides
  $s_{r_j+1},\ldots,s_{r_0}$, hence contributes at least $r_0-r_j$ to
  $d=\sum_{i=1}^{r_0} \deg_x(s_i)$.  Therefore
  \[
    d \ \geq\ \sum_{j=1}^t (r_0-r_j)\ \geq\ 1+2+\cdots+t\ =\ t(t+1)/2,
  \]
  so $t\leq \sqrt{2d}$.  Since the algorithm outputs at most one
  additional solution regime coming from $f_{r_0}=0$, it outputs at
  most $t+1\leq 1+\sqrt{2d}$ solution regimes.  Each such $r$ produces
  at most $n-r$ inconsistency regimes, so the total number of regimes
  is at most $(n+1)(t+1)\leq (n+1)(1+\sqrt{2d})$.
\end{proof}

\begin{algorithm}[H]
\caption{\texttt{PLSviaCTSNF}}
\label{alg:PLSviaCTSNF}
\begin{algorithmic}[1]
  \REQUIRE A PLS problem $(\mat{A}, \vec{b},N,Z)$ over $\F[Y,x]$, for parameter list $Y$ and distinguished variable $x$.
  \ENSURE A corresponding PLS solution $S$ consisting of solution regimes and inconsistency regimes.
  \STATE Split the constraint sets as $N=N_Y\cup N_x$ and $Z=Z_Y\cup Z_x$, where $N_Y=N\cap \F[Y]$, $Z_Y=Z\cap \F[Y]$ and $N_x=N\setminus N_Y$, $Z_x=Z\setminus Z_Y$.
  \STATE Over the ring $\F(Y)[x]$, let $T$ solve the CTSNF problem $(\mat{A}, N_Y,Z_Y)$. $T$ is a set of triangular Smith regimes of form $(\mat{U}, \mat{H},\mat{R}, N',Z')$. Let $S = \emptyset$.
  \STATE For each triangular Smith regime $(\mat{U}, \mat{H},\mat{R},N',Z')$ in $T$, using algorithm
  \texttt{TriangularSmithPLS} solve the PLS problem $(\mat{H}$, $\mat{U}\vec{b},\, N'\cup N_x,\, Z'\cup Z_x)$.  For each returned regime, adjoin it to $S$, adjusting solution data by factor $\mat{R}$ as in Lemma~\ref{lem:equiv} (and leaving inconsistency regimes unchanged).
  \STATE Return $S$.
\end{algorithmic}
\end{algorithm}

\begin{theorem}
  Algorithm \ref{alg:PLSviaCTSNF} is correct.
\end{theorem}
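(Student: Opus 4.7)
The plan is to verify, for the output $S$ of Algorithm~\ref{alg:PLSviaCTSNF}, the two conditions from the definition of a PLS solution: (i) every element of $S$ is a valid solution regime for $(\mat{A}, \vec{b}, N, Z)$, and (ii) the union of the constraint sets of regimes in $S$ covers $V(N, Z)$.

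For the covering property (ii), I would chain two covering facts. The CTSNF solver invoked in step~1 returns a list $T$ of triangular Smith regimes whose constraint sets, by the definition of a CTSNF solution, cover $V(N, Z)$. For each regime $(\mat{U}, \mat{H}, \mat{R}, N', Z') \in T$, step~2 calls \texttt{TriangularSmithPLS} on $(\mat{H}, \mat{U}\vec{b}, N', Z')$; Lemma~\ref{lem:trismithpls} guarantees that the constraint sets of the returned regimes cover $V(N', Z')$. Since the adjustment $\vec{v} \mapsto \mat{R}^{-1}\vec{v}$ in step~2 does not alter the constraint sets $(N_j, Z_j)$, taking the union over all $T$-entries yields the desired cover of $V(N, Z)$.

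For the regime-correctness property (i), I would fix a triangular Smith regime $(\mat{U}, \mat{H}, \mat{R}, N', Z')$ from $T$. By the definition of a triangular Smith regime, for every $a \in V(N', Z')$ the evaluation $\mat{U}(a)$ is unimodular in $x$, $\mat{R}$ is nonsingular, and $\mat{U}(a)\mat{A}(a)\mat{R} = \mat{H}(a)$ is in triangular Smith form. By Lemma~\ref{lem:trismithpls}, the regimes $(\vec{v}_j, \mat{C}_j, N_j, Z_j)$ produced by \texttt{TriangularSmithPLS} on $(\mat{H}, \mat{U}\vec{b}, N', Z')$ are valid solution regimes for that transformed PLS. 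Invoking Lemma~\ref{lem:equiv} (with its overarching constraint set instantiated as $(N', Z')$), each adjusted tuple $(\mat{R}^{-1}\vec{v}_j, \mat{R}^{-1}\mat{C}_j, N_j, Z_j)$, which is exactly what step~2 adjoins to $S$, is a solution regime for $(\mat{A}, \vec{b}, N', Z')$, and hence for $(\mat{A}, \vec{b}, N, Z)$ because $V(N_j, Z_j) \subset V(N', Z') \subset V(N, Z)$.

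The main delicate point is administrative rather than mathematical: the triangular Smith regimes may live over a parameterized extension $F_Y$, so $\mat{U}$, $\mat{H}$, and the $\vec{v}_j$ can have coefficients that are rational in, or algebraic over, $F[Y]$; the defining denominators and minimal polynomials are folded into $N'$ and $Z'$ by the CTSNF definition. Consequently the partial evaluation of $\mat{R}^{-1}\vec{v}_j$ and $\mat{R}^{-1}\mat{C}_j$ is automatically well defined on $V(N_j, Z_j)$, and Lemmas~\ref{lem:equiv} and~\ref{lem:trismithpls} together with the CTSNF covering property deliver the theorem.
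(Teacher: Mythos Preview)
Your proposal is correct and follows essentially the same approach as the paper's proof: invoke the CTSNF covering property for step~1, then apply Lemmas~\ref{lem:trismithpls} and~\ref{lem:equiv} to each triangular Smith regime in step~2. The paper's version is much terser (two sentences), whereas you spell out the regime-correctness and covering conditions separately; one minor quibble is that the containment $V(N',Z')\subset V(N,Z)$ you assert is not guaranteed by the CTSNF definition, but it is also unnecessary, since the definition of a solution regime for $(\mat{A},\vec{b},N,Z)$ does not actually depend on $N,Z$.
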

\begin{proof}
  Let $(a,\xi)\in V(Z)\cap D(N)$.  By construction $(a,\xi)$ satisfies
  the $Y$-only constraints $V(Z_Y)\cap D(N_Y)$, so at least one
  triangular Smith regime of $T$ in step $2$ is valid at $a$.  For
  that regime, step $3$ applies Algorithm~\ref{alg:TrianglarSmithPLS}
  and produces either a solution regime or an inconsistency regime
  whose constructible set contains $(a,\xi)$
  (Lemma~\ref{lem:trismithpls}).  Lemma~\ref{lem:equiv} transfers
  solution data back through $\mat{R}$ without changing the underlying
  constraint set, so the corresponding regime for the original PLS
  problem is correct and covers~$(a,\xi)$.  
\end{proof}

\section{Solving Comprehensive Triangular Smith Normal Form}
\label{sec:solve}

In view of the reductions of the preceding section, to solve a
parametric linear system it remains only to solve a comprehensive
triangular Smith form problem.  This is difficult in general but we
give a method to give a comprehensive solution with polynomially many
regimes in the bivariate and trivariate cases.

%\subsection{Bivariate}

\begin{theorem}
  \label{thm:bivarctsnf}
  Let $\rho=\max(m,n)$, and let $\mat{A}\in \F[y,x]^{m\times n}$
  have degree $d$ in $x$ and degree $e$ in $y$.  Let $N,Z$ be
  polynomial sets defining a Zariski-constructible constraint on $y$.
  Then the CTSNF problem $(\mat{A},N,Z)$ has a solution of at most
  $\softO(\rho^2de)$ triangular Smith regimes.
\end{theorem}
\begin{proof}
  We first solve the unconstrained case $N=Z=\emptyset$.  Compute a
  triangular Smith form $\mat{U}_0, \mat{H}_0, \mat{R}_0$ over
  $\F(y)[x]$ such that $\mat{U}_0\mat{A}\mat{R}_0=\mat{H}_0$.  This
  regime is valid for evaluations that do not zero the denominators
  (polynomials in $y$) of $\mat{H}_0$ and $\mat{U}_0$.  Let
  $\Delta_0=\den(\mat{U}_0,\mat{H}_0)$ and set
  $N_0=\mathrm{Fac}(\Delta_0)$, with $Z_0=\emptyset$, to complete
  the generic regime.

  Then, for each $f\in N_0$, adjoin the regime
  $(\mat{U}_f,\mat{H}_f,\mat{R}_f,
  N_f=N_0\setminus\{f\}, Z_f=\{f\})$, obtained by computing the
  triangular Smith form over $(\F[y]/\langle f\rangle)[x]$.
  Together with the generic regime indexed by $0$, these regimes cover
  all specializations of $y$: at any point either no factor of
  $\Delta_0$ vanishes, or at least one irreducible factor $f$ of
  $\Delta_0$ vanishes.  From the bounds of Theorem~\ref{thm:bounds},
  $\deg_y(\Delta_0)=\softO(\rho^2de)$, so the number of irreducible
  factors, and hence the number of regimes, is $\softO(\rho^2de)$.

  For general input constraints $N,Z$, intersect each produced regime
  with $V(Z)\cap D(N)$, i.e., replace each output
  $(\mat{U}_*,\mat{H}_*,\mat{R}_*,N_*,Z_*)$ by
  $(\mat{U}_*,\mat{H}_*,\mat{R}_*,N\cup N_*,Z\cup Z_*)$.  This
  changes neither the validity of the regimes nor their number.
\end{proof}

We can proceed in a similar way when there are three parameters, but
must address an additional complication that arises.

Before proving the trivariate bound, we use the following degree-bound
observation.  The denominator degree bounds used in
Theorem~\ref{thm:bounds} apply, with the same proof, when the single
coefficient parameter $y$ is replaced by a pair of coefficient
parameters $y,z$.  In particular, for
$\mat{A}\in \F[y,z,x]^{m\times n}$ with degree at most $d$ in $x$
and at most $e$ in each of $y,z$, the common denominator of the
computed generic triangular Smith regime over $\F(y,z)[x]$ has degree
$\softO(\rho^2de)$ in each of $y$ and $z$.  The same bound applies to
the one-parameter denominators arising after restricting to an
irreducible curve factor and computing over the corresponding
parameterized coefficient field.  This follows from the determinant
expressions controlling the common denominators in Fact~\ref{fact:ore}
and Theorem~\ref{thm:bounds}, with the remaining coefficient
parameters carried through in the coefficient field.

\begin{theorem}
  \label{thm:trivarctsnf}
  Let $\rho=\max(m,n)$, and let $\mat{A}\in \F[z,y,x]^{m\times n}$
  have degree $d$ in $x$ and degree $e$ in $y,z$.  Let $N,Z$ be
  polynomial sets defining a Zariski-constructible constraint on $y$
  and $z$. Then the CTSNF problem $(\mat{A},N,Z)$ has a solution of
  at most $\softO(\rho^4d^2e^2)$ triangular Smith regimes.
\end{theorem}

\begin{proof}
  As in the bivariate case above, we solve the unconstrained case and
  just adjoin $N,Z$, if nontrivial, to the Zariski-constructible
  condition of each solution regime.

  First compute a triangular Smith form
  $\mat{U}_0, \mat{H}_0, \mat{R}_0$ over $\F(y,z)[x]$ such that
  $\mat{U}_0\mat{A}\mat{R}_0=\mat{H}_0$.  This will be valid for
  evaluations that don't zero the denominators (polynomials in $y,z$)
  of $\mat{H}_0$ and $\mat{U}_0$.  Let
  $\Delta_0=\den(\mat{U}_0,\mat{H}_0)$ and set
  $N_0=\mathrm{Fac}(\Delta_0)$, and set $Z_0=\emptyset$ to complete
  the first regime.

  Next, for each $f\in N_0$, if $y$ occurs in $f$ compute a triangular
  Smith form $(\mat{U}_f,\mat{H}_f,\mat{R}_f)$ over
  $(\F(z)[y]/\langle f\rangle)[x]$.  If $y$ does not occur in $f$,
  interchange the roles of $y,z$ and compute over
  $(\F(y)[z]/\langle f\rangle)[x]$.  In either case the resulting
  coefficients lie in a parameterized extension in the sense of
  Definition~\ref{def:paramext}.

  Let $\delta_f=\den(\mat{U}_f,\mat{H}_f)$.  Here $\delta_f$ arises from
  divisions by elements of the coefficient field $\F(z)$ (respectively
  $\F(y)$), hence after clearing denominators it may be taken in
  $\F[z]$ (respectively $\F[y]$).  We therefore adjoin the regime
  \[
    (\mat{U}_f,\mat{H}_f,\mat{R}_f,\;
      N_f=(N_0\setminus\{f\})\cup \mathrm{Fac}(\delta_f),\;
      Z_f=\{f\}),
  \]
  which is valid on $V(f)\cap D(N_f)$.

  The regimes indexed by $0$ and by single factors $f\in N_0$ cover
  all specializations except for those in which either (i) at least
  two distinct factors from $N_0$ vanish simultaneously, or (ii) for
  some $f\in N_0$ the denominator $\delta_f$ vanishes on $V(f)$.  Both
  situations give rise only to zero-dimensional exceptional sets.
  Indeed, for distinct irreducible $f,g\in N_0$ the intersection
  $V(f)\cap V(g)\subseteq \overline{\F}^2$ is finite, and by
  B\'ezout's theorem~\citep{CoxLit13} it contains at most
  $\deg(f)\deg(g)$ points.  Similarly, since $f$ is irreducible and
  depends on $y$ (respectively $z$) while $\delta_f\in \F[z]$
  (respectively $\F[y]$), we have $\gcd(f,\delta_f)=1$ and again
  B\'ezout's theorem bounds $|V(f)\cap V(\delta_f)|$ by
  $\deg(f)\deg(\delta_f)$.

  These finitely many points can be enumerated (for example by a
  resultant computation or a triangular decomposition of the
  appropriate ideals).  For each such point $(y,z)\in\overline{\F}^2$
  we produce a separate regime by evaluating $\mat{A}$ at that point
  and computing a triangular Smith form over the corresponding
  (possibly algebraic) extension field, as allowed in
  Definition~\ref{def:paramext}.

  The degree bounds $\deg(\Delta_0)=\softO(\rho^2de)$ and
  $\deg(\delta_f)=\softO(\rho^2de)$ follow from the degree-bound
  observation above.  Since
  $\sum_{f\in N_0}\deg(f)=\deg(\Delta_0)$, the total number of
  point regimes arising from the intersections above is bounded by
  $\bigO(\deg(\Delta_0)^2)=\softO((\rho^2de)^2)$, yielding the stated
  $\softO(\rho^4d^2e^2)$ overall regime bound.
\end{proof}

\begin{corollary}
  For a PLS with $m\times n$ matrix $\mat{A}$, $\vec{b}$ an
  $m$-vector, let $\rho=\max(m,n)$, and suppose
  $\deg_x(\mat{A}, \vec{b}) \leq d, \deg_y(\mat{A}, \vec{b}) \leq
  e,\deg_z(\mat{A}, \vec{b}) \leq e$.  Counting both solution
  regimes and inconsistency regimes, we have
  \begin{enumerate}
  \item $\bigO(\rho^{1.5}d^{0.5})$ regimes in the PLS solution for the
    univariate case (domain of $\mat{A}$, $\vec{b}$ is $\F[x]$).
  \item $\softO(\rho^{3.5}d^{1.5}e)$ regimes in the PLS solution for the
    bivariate case (domain of $\mat{A}$, $\vec{b}$ is $\F[x,y]$).
  \item $\softO(\rho^{5.5}d^{2.5}e^2)$ regimes in the PLS solution for the
    trivariate case (domain of $\mat{A}$, $\vec{b}$ is $\F[x,y,z]$).
  \end{enumerate}
\end{corollary}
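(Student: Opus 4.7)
The plan is to chain Algorithm~\ref{alg:PLSviaCTSNF} with the CTSNF regime-count bounds from the two preceding theorems of Section~6 and the per-regime expansion bound of Lemma~\ref{lem:trismithpls}: if there are \(R\) CTSNF regimes and each produces at most \(T\) PLS regimes via \texttt{TriangularSmithPLS}, then the total PLS regime count is at most \(RT\). So the proof splits naturally into bounding \(R\) (case by case in the number of parameters) and bounding \(T\) (uniformly, as a function of the degree of \(\det \mat{H}\)).

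For the factor \(T\), I would use Lemma~\ref{lem:trismithpls}, which gives \(T \leq \sqrt{2 d_H}\) with \(d_H = \deg_x(\det \mat{H})\). To bound \(d_H\), I would observe that for each triangular Smith regime \((\mat{U},\mat{H},\mat{R},N,Z)\) the product of the nonzero diagonal entries of \(\mat{H}\) equals, up to a unit, the \(r\)th determinantal divisor of \(\mat{A}\mat{R}\), where \(r = \rank(\mat{A})\). This divisor divides every \(r \times r\) minor of \(\mat{A}\mat{R}\), and each such minor has \(x\)-degree at most \(rd \leq md\). Hence \(d_H \leq md\) in all three cases, yielding \(T = \bigO(\sqrt{md})\).

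For the factor \(R\): in the univariate case there are no auxiliary parameters, so CTSNF is just the single generic Hermite form over \(F[x]\) and \(R = \bigO(1)\); the bivariate CTSNF theorem above gives \(R = \bigO(n^2 de)\); and the trivariate CTSNF theorem gives \(R = \bigO(n^4 d^2 e^2)\). Multiplying \(R\cdot T\) then yields the bivariate bound \(\bigO(n^{2.5} d^{1.5} e)\) and the trivariate bound \(\bigO(n^{4.5} d^{2.5} e^2)\) immediately from the estimates of this section and the previous one.

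The principal obstacle I expect is the careful propagation of degree bounds: tracking the degree of \(\det \mat{H}\) through the rectangular-to-square reduction of Algorithm~\texttt{HermiteForm} (so that the \(r\)th determinantal divisor argument actually applies to the Hermite form output rather than to an intermediate block), and tracking the \(\bigO(n^2 de)\) common-denominator degree through the B\'ezout intersection step in the trivariate theorem. A secondary subtlety is the univariate exponent: the direct chaining \(R = 1\), \(T = \bigO(\sqrt{md})\) yields only \(\bigO(m^{0.5} d^{0.5})\), so reaching the stated \(\bigO(m^{1.5} d^{0.5})\) requires accounting for an additional factor of \(m\), which presumably comes from enumerating the nullspace-basis columns \(\vec{e}_{r+1},\ldots,\vec{e}_n\) or from separating consistency-check sub-regimes across ranks; this is the one place in the argument I would want to reconcile carefully with the bookkeeping in Algorithm~\ref{alg:TrianglarSmithPLS}.
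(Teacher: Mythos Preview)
Your approach is exactly the paper's: its entire proof is the single sentence ``By Lemma~\ref{lem:trismithpls}, each CTSNF regime expands to at most $\sqrt{md}$ PLS regimes,'' leaving the reader to multiply by the CTSNF regime counts from the two preceding theorems (and implicitly $R=\bigO(1)$ in the univariate case). Your worry about the univariate exponent is therefore unfounded: the paper does nothing special to obtain $\bigO(m^{1.5}d^{0.5})$, and the chain you describe yields the tighter $\bigO(m^{0.5}d^{0.5})$, which of course lies inside the stated bound; there is no hidden factor of $m$ to hunt down.
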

\begin{proof}
  Consider one CTSNF regime $(\mat{U},\mat{H},\mat{R},N',Z')$ for
  $\mat{A}$, so that $\mat{H}$ is in triangular Smith form with
  diagonal entries $s_i$.  Let $r_0=\max\{i\mid s_i\neq 0\}$ and
  $d_H=\sum_{i=1}^{r_0}\deg_x(s_i)$, as in
  Lemma~\ref{lem:trismithpls}.  The product of the nonzero diagonal
  entries of a triangular Smith form is the rank-$r_0$ determinantal
  divisor.  Hence it divides every nonzero $r_0\times r_0$ minor of
  the input matrix (after the constant right transformation
  $\mat{R}$).  Since some such minor has degree at most $r_0d$, we
  have $d_H\leq r_0d\leq \rho d$.

  Lemma~\ref{lem:trismithpls} therefore implies that applying
  Algorithm~{\normalfont\texttt{TriangularSmithPLS}} to this regime
  produces at most $(n+1)(1+\sqrt{2d_H})=
  \bigO(\rho^{1.5}d^{0.5})$ PLS regimes, including inconsistency
  regimes.  In the univariate case there is a single CTSNF regime.  In
  the bivariate case we multiply this per-regime bound by the
  $\softO(\rho^2de)$ CTSNF-regime bound of Theorem~\ref{thm:bivarctsnf}, and in the
  trivariate case by the $\softO(\rho^4d^2e^2)$ bound of Theorem~\ref{thm:trivarctsnf}.
  This gives the three stated estimates.
\end{proof}

\section{Normal forms and Eigenproblems}
\label{sec:nfeigs}

Comprehensive Hermite Normal form and comprehensive Smith Normal form
are immediate corollaries of our comprehensive triangular Smith form.
For Hermite form, use the same comprehensive construction but take the
right hand cofactor to be the identity, $\mat{R} = \mat{I}$, and omit
the well-tempered/triangular-Smith divisibility requirement in the
Hermite-form computation.  For Smith form one can convert
each regime of CTSNF to a Smith regime.  Where
$\mat{U}\mat{A}\mat{R} = \mat{H}$ with $\mat{H}$ a triangular
Smith form, perform column operations to obtain
$\mat{U}\mat{A}\mat{V} = \mat{S}$
with $\mat{S}$ the diagonal of $\mat{H}$.  In $\mat{H}$ the diagonal
entries divide the off diagonal entries in the same row. Subtract
multiples of the $i$-th column from the subsequent columns to
eliminate the off diagonal entries.  Because the diagonal entries are
monic, no new denominator factors arise and
$\det(\mat{V}) = \det(\mat{R}) \in \F$.  Thus when
$(\mat{U},\mat{H},\mat{R}, N,Z)$ is a valid regime in a CTSNF solution
for $\mat{A}$, then $(\mat{U},\mat{S},\mat{V}, N,Z)$ is a valid regime
for Smith normal form.

It is well known that if $\mat{A} \in \K^{n\times n}$ for field $\K$
(that may involve parameters) and $\lambda$ is an additional variable,
then the Smith invariants $s_1, \ldots, s_n$ of
$\lambda \mat{I} - \mat{A}$ are the Frobenius invariants of $\mat{A}$
and $\mat{A}$ is similar to its Frobenius normal form,
$\oplus_{i=1}^n \mat{C}_{s_i}$, where $\mat{C}_s$ denotes the
companion matrix of polynomial $s$.  Thus we have comprehensive
Frobenius normal form as a corollary of CTSNF, however it is without
the similarity transform. It would be interesting to develop a
comprehensive Frobenius form in which each regime also includes a
similarity transform.

Parametric eigenvalue problems for $\mat{A}$ correspond to PLS for
$\lambda \mat{I} -\mat{A}$ with zero right hand side. Often eigenvalue
multiplicity is the concern.  The geometric multiplicity is available
from the Smith invariants, as for example on the diagonal of a
triangular Smith form.  Common roots of $2$ or more of the invariants
expose geometric multiplicity and square-free factorization of the
individual invariants exposes algebraic multiplicity.  Note that
square-free factorization may impose further restrictions on the
parameters.  Comprehensive treatment of square-free factorization is
considered in~\citep{LemMaz05}.

\subsection{Eigenvalue multiplicity example}
The following matrix, due originally to a question on
sci.math.num-analysis in 1990 by Kenton K.~Yee, is discussed
in~\citep{Cor02}.  We change the notation used there to avoid a clash
with other notation used here.  The matrix is
\[
  \mat{Y} = 
  \begin{bmatrix}
    z^{-1} & z^{-1} & z^{-1} & z^{-1} & z^{-1} & z^{-1} & z^{-1} & 0 \\
    1 & 1 & 1 & 1 & 1 & 1 & 0 & 1 \\
    1 & 1 & 1 & 1 & 1 & 0 & 1 & 1 \\
    1 & 1 & 1 & 1 & 0 & 1 & 1 & 1 \\
    1 & 1 & 1 & 0 & 1 & 1 & 1 & 1 \\
    1 & 1 & 0 & 1 & 1 & 1 & 1 & 1 \\
    1 & 0 & 1 & 1 & 1 & 1 & 1 & 1 \\
    0 & z & z & z & z & z & z & z \\
  \end{bmatrix}\>.
\]
One of the original questions was to compute its eigenvectors.  Since
it contains a symbolic parameter $z$, this is a parametric eigenvalue
problem which we can turn into a parametric linear system, namely to
present the nullspace regimes for $\lambda \mat{I} - \mat{Y}$.

Over $\F(z)[\lambda]$, after preconditioning, we get as the triangular
Smith form diagonal
$(1,1,1,1,1,\lambda^2 -1, \lambda^2-1, (\lambda^2 -1)f(\lambda))$,
where $f(\lambda) = \lambda^2 -(z + 6 + z^{-1})\lambda +7$.

\begin{remark}
  Without preconditioning, the Hermite form diagonal is instead
  $(1,1,1,1,\lambda -1, \lambda^2-1, (\lambda^2 -1), (\lambda +
  1)f(\lambda))$.
\end{remark}

The denominator of $\mat{U}$, $\mat{H}$ is a power of $z$, so the only
constraint is $z\neq 0$ which is already a constraint for the input
matrix.  We get regimes of rank~$5$ for $\lambda = \pm 1$, rank~$7$
for $\lambda$ being a root of $f$, and rank~$8$ for all other
$\lambda$.  In terms of the eigenvalue problem, we get eigenspaces of
dimension $3$ for each of $1$, $-1$ and of dimension $1$ for the two
roots of $f(\lambda)$.

To explore algebraic multiplicity, we can examine when $f$ has $1$ or
$-1$ as a root.  When $z$ is a root of ${z}^{2}+14\,z+1$, $f(\lambda)$
factors as $(\lambda+1)(\lambda+7)$ and when $z = 1$ we have
$f(\lambda) = (\lambda - 1)(\lambda - 7)$.  These factorizations may
be discovered by taking resultants of $f$ with $\lambda -1$ or
$\lambda +1$.

%This all seems quite straightforward.  It suggests an expansion of our input requirement from $F[y,x]$ to a domain with some denominators (but polynomial in at least one of the parameters).

%One could also do a Hermite form, I believe, over $F(\lambda)[z,d]/\langle dz -1\rangle$. It seems to me to be a Principal Ideal Domain, so fair game for Hermite...
%One does ordinary Hermite over $F(\lambda)[z]$ with some adjustment game concerning powers of $z$ (which are units).

%I guess this wouldn't work for arbitrary denominator $f(y,x)$ in the input data.

%The $\deg_z$ of the determinant is $1$, which also suggests that we present our analysis first in terms of the degree of $\delta \det(\mat{A})$.  Replacement of $\delta$ by bound $nd$ can be a corollary.

%%
%% The acknowledgments section is defined using the "acks" environment
%% (and NOT an unnumbered section). This ensures the proper
%% identification of the section in the article metadata, and the
%% consistent spelling of the heading.

\subsection{Matrix Logarithm}
\label{sec:matrixlogarithm}

Theorem 1.28 of~\cite{Hig08} states conditions under which the matrix
equation $\exp(\mat{X}) = \mat{A}$ has so-called \emph{primary matrix
  logarithm} solutions, and under which conditions there are more.  If
the number of distinct eigenvalues $s$ of $\mat{A}$ is \emph{strictly
  less} than the number $p$ of distinct Jordan blocks of $\mat{A}$
(that is, the matrix $\mat{A}$ is \emph{derogatory}), then the
equation also has so-called \emph{nonprimary} solutions as well, where
the branches of logarithms of an eigenvalue $\lambda$ may be chosen
differently in each instance it occurs.

As a simple example of what this means, consider
\begin{equation}
  \mat{A} = \left[ \begin {array}{cc} a&1\\ \noalign{\medskip}0&a\end {array}
  \right] 
  \>.
\end{equation}
When we compute its matrix logarithm (for instance using the
\texttt{MatrixFunction} command in Maple), we find
\begin{equation}
  \mat{X}_A   = \left[
    \begin {array}{cc}
      \ln  \left( a \right) &{a}^{-1} \\
      \noalign{\medskip}0&\ln  \left( a \right)
    \end {array} \right] 
  \>.
\end{equation}
This is what we expect, and taking the matrix exponential (a
single-valued matrix function) gets us back to $\mat{A}$, as expected.
However, if instead we consider the derogatory matrix
\begin{equation}
    \mat{B} =  \left[ \begin {array}{cc} a&0\\ \noalign{\medskip}0&a\end {array}
 \right] 
\end{equation}
then its matrix logarithm as computed by \texttt{MatrixFunction} is
also derogatory, namely
\begin{equation}
  \mat{X}_B = \left[ \begin {array}{cc} \ln  \left( a \right) &0
    \\ \noalign{\medskip}0&\ln  \left( a \right) \end {array} \right] \>.
\end{equation}
Yet there are other solutions as well: if we add $2\pi i$ to the first
entry and $-2\pi i $ to the second logarithm, we unsurprisingly find
another matrix $\mat{X}_C$ which also satisfies
$\exp(\mat{X}) = \mat{B}$.  But adding $2\pi i$ to the first entry of
$\mat{X}_A$ while adding $-2\pi i$ to its second logarithm, we get
another matrix
\begin{equation}
  \mat{X}_D =  \left[ \begin {array}{cc} \ln  \left( a \right) +2\,i\pi&{a}^{-1}
    \\ \noalign{\medskip}0&\ln  \left( a \right) -2\,i\pi\end {array}
\right] 
\end{equation}
which has the (somewhat surprising) property that
$\exp(\mat{X}_D) = \mat{B}$, not $\mat{A}$.

This example demonstrates in a minimal way that the detailed Jordan
structure of $\mat{A}$ strongly affects the nature of the solutions to
the matrix equation $\exp(\mat{X}) = \mat{A}$.  This motivates the
ability of code to detect automatically the differing values of the
parameters in a matrix that make it derogatory.  To explicitly connect
this example to CTSNF, consider
\begin{equation}
  \mat{M} = \begin{bmatrix} a & b\\ 0 & a\end{bmatrix}
\end{equation}
so that $\mat{A}$ above is $\mat{M}_{b = 1}$ and
$\mat{B}= \mat{M}_{b = 0}$. The CTSNF applied to
$\lambda \mat{I} - \mat{M}$ produces two regimes, with forms
\begin{equation}
  \mat{H}_{b \neq 0} =
  \begin{bmatrix}
    1 & \lambda/b \\
    0 & (\lambda-a)^2
  \end{bmatrix},
  \mat{H}_{b=0} =
  \begin{bmatrix}
    \lambda - a & 0 \\
    0 & \lambda-a
  \end{bmatrix},
\end{equation} 
exposing when the logarithms will be linked or distinct.  Note that in
this case the Frobenius structure equals the Jordan structure.

\subsection{Model of infectious disease vaccine effect}

%{\color{red}{A second example could be the epidemiological example if we don't encounter too many algebraic geometry problems}}

\cite{RahZou15} have made a model of vaccine effect when there are two
subpopulations with differing disease susceptibility and vaccination
rates.  Within this study stability of the model is a function of the
eigenvalues of a Jacobian $\mat{J}$.  Thus we are interested in cases
where the following matrix is singular.
\[
  \mat{A} = \lambda \mat{I} - \mat{J} = \begin{bmatrix}
  \lambda-w&  0&  -a&  -c\\
  0&  \lambda-x&  -b&  -d\\
  0&  0&  \lambda-a-y&   c\\
  0&  0&   b&  \lambda-d-z\\
\end{bmatrix}.
\]
Here $w,x$ are vaccination rates for the two populations, $y,z$ are
death rates, $a,d$ are within population transmission rates, and $b,c$
are the between population transmission rates. We have simplified
somewhat: for instance $a,b,c,d$ are transmission rates multiplied by
other parameters concerning population counts.  Stability depends on
the positivity of the largest real part of an eigenvalue.  For the
sake of reducing expression sizes in this example we will arbitrarily
set $y = z = 1/10$.  For the same reason we will skip right
multiplication by an R to achieve triangular Smith form.  Hermite form
$\mat{H}$ of $\lambda \mat{I} - \mat{J}$ will suffice, revealing the
eigenvalues that are wanted.
\[
  \mat{H} = 
\begin{bmatrix}
\lambda+w & 0& 0& -(ad-a\lambda-bc-a/10)/c\\
0& \lambda+x& 0& \lambda+1/10\\ 
0& 0& 1& (d-\lambda-1/10)/c\\ 
0& 0& 0&\lambda^2 +(1/5-a-d)\lambda +ad -cb -(1/10)d -(1/10)a +1/100\\
\end{bmatrix}.
\]
The discriminant of the last entry gives the desired information for
the application subject to the denominator validity: $ c \neq 0$.
When $c = 0$ the matrix is already in Hermite form, so again the
desired information is provided.

This example illustrates that often more than three parameters can be
easily handled.  In experiments with this model not reported here, we
did encounter cases demanding solution beyond the methods of this
paper.  On a more positive note, we feel that comprehensive normal
form tools could help analyze models like this when larger in scope,
for instance modeling $3$ or more subpopulations.

\subsection{The Kac-Murdock-Szeg\"o example}

\cite{CorMor17} report times for computation of the comprehensive
Jordan form for matrices of the following form, taken
from~\citep{Tre01}, of dimensions $2$ to about $20$:
\begin{equation}
  \mathrm{KMS}_n =  \left[ \begin {array}{ccccc} 1&-\rho&&&\\ \noalign{\medskip}\>-\rho\>&
{\rho}^{2}+1&\>-\rho&&\\ \noalign{\medskip}&\ddots&\ddots&\ddots&
\\ \noalign{\medskip}&&-\rho\>&{\rho}^{2}+1&\>-\rho\\ \noalign{\medskip}
&&&-\rho\>&1\end {array} \right]\>. 
\end{equation}
This is, apart from the $(1,1)$ entry and the $(n,n)$ entry, a
Toeplitz matrix containing one parameter, $\rho$.  The reported times
to compute the Jordan form were plotted in~\citep{CorMor17} on a log
scale, and looked as though they were exponentially growing with the
dimension, and were reported in that paper as growing exponentially.

The results of this paper show instead that polynomial time is
possible for this family, because there are only two parameters
($\rho$ and the eigenvalue parameter, say $\lambda$).  The Hermite
forms for these matrices are all (as far as we have computed) trivial,
with diagonal all $1$ except the final entry which contains the
determinant.  Thus all the action for the Jordan form must happen with
the discriminant of the determinant.  Experimentally, the discriminant
with respect to $\lambda$ has degree $n^2+n-4$ for KMS matrices of
dimension $n \ge 2$ (this formula was deduced experimentally by giving
a sequence of these degrees to the Online Encyclopedia of Integer
Sequences~\citep{Slo20}) and each discriminant has a factor
$\rho^{n(n-1)}$, leaving a nontrivial factor of degree $2n-4$ growing
only linearly with dimension.  The case $\rho=0$ does indeed give a
derogatory KMS matrix (the identity matrix).  The other factor has at
most a linearly-growing number of roots for each of which we expect
the Jordan form of the corresponding KMS matrix to have one block of
size two and the rest of size one.  We therefore see only polynomial
cost necessary to compute comprehensive Jordan forms for these
matrices, in accord with our theorem.
%\subsection{Third example}
%{\color{red}{A third example could be a construction to show %the bounds are sharp (if we have it)}
%\\
%
%or other examples...
%}

\section{Conclusions}
\label{sec:conc}

%{\color{red}{
%\begin{itemize}
%\item
%We have shown that polynomially many solution descriptions %(regimes) are possible for problems with up to $3$ parameters.
%\item
%To the best of our knowledge ours is the first method to achieve this polynomial worst case.
%\item
%We suppose\marginpar{suppose = believe? conjecture? have proved %but this margin is too small to contain the proof?} 
%the bounds are sharp, but note that,
%as the examples indicate, many problems will have substantially %fewer regimes.
%\item
%We have not been concerned with methods of further %simplificaion and reduction of number of regimes in specific %cases such as very sparse matrices.
%\item
%We conjecture that at most polynomially many regimes are %possible for any fixed number of parameters
%
%\end{itemize}
%}}
We have shown that using the CTSNF to solve parametric linear systems
is of cost polynomial in the dimension of the linear system and
polynomial in parameter degree, for problems containing up to three
parameters.  This shows that polynomially many regimes suffice for
problems of this type.  To the best of our knowledge, this is the
first method to prove a polynomial regime bound for parametric linear
systems with at most three parameters, certainly in the
constructible-regime model considered here.

It remains an open question whether, for linear systems with a fixed
number of parameters greater than three, a number of regimes suffices
that is polynomial in the input matrix dimension and polynomial degree
of the parameters, being exponential only in the number of parameters.
%and we further conjecture that polynomially many regimes are sufficient for linear problems with any fixed number of parameters.

Through experiments with random matrices we have indications that the
worst case bounds we give are sharp, though we haven't proven this
point.  As the examples indicated, many problems will have fewer
regimes, and sometimes substantially fewer regimes.  We have not
investigated the effects of further restrictions of the type of
problem, such as to sparse matrices.

\section*{Acknowledgements}
This work was supported by the Natural Sciences and Engineering
Research Council of Canada and by the Ontario Research Centre for
Computer Algebra. The third author, L.~Rafiee Sevyeri, would like to
thank the Symbolic Computation Group (SCG) at the David R. Cheriton
School of Computer Science of the University of Waterloo for their
support while she was a visiting researcher there.

\newcommand{\doi}[1]{%
  \href{https://doi.org/#1}{DOI:\nolinkurl{#1}}%
}
%\let\oldurl\url
%\renewcommand{\url}[1]{\unskip:\space\oldurl{#1}}
%\bibliography{parlinsys}

\end{document}